\date{June 18, 2021}
\begin{document}

\centerline {\Large{\bf  Symmetric mutation algebras in the context of sub-cluster algebras}}

\centerline{}

\centerline{}

\centerline{\bf {Ibrahim Saleh}}

\centerline{Email: salehi@uww.edu}

 \newtheorem{thm}{Theorem}[section]
 \newtheorem{cor}[thm]{Corollary}
  \newtheorem{cor and defn}[thm]{Corollary and Definition}
 \newtheorem{lem}[thm]{Lemma}
 \newtheorem{prop}[thm]{Proposition}
 \theoremstyle{definition}
 \newtheorem{defn}[thm]{Definition}
 \newtheorem{defns}[thm]{Definitions}
 \theoremstyle{remark}
 \newtheorem{rem}[thm]{Remark}
 \newtheorem{rems}[thm]{Remarks}
 \newtheorem{exam}[thm]{Example}
 \newtheorem{exams}[thm]{Examples}
 \newtheorem{conj}[thm]{Conjecture}
 \newtheorem{que}[thm]{Question}
  \newtheorem{ques}[thm]{Questions}
  \newtheorem{ques and Conj}[thm]{Questions and Conjecture}
 \newtheorem{rem and def}[thm]{Remark and Definition}
 \newtheorem{def and rem}[thm]{Definition and Remark}
 \newtheorem{corr}[thm]{Corollary of the Proof of Proposition 5.15}
 \numberwithin{equation}{section}
\newtheorem{IbrI}{Lemma}[section]
\newtheorem{chiral}[IbrI]{Definition}
\newtheorem{IbrII}[IbrI]{Lemma}
\newcommand{\field}[1]{\mathbb{#1}}
 \newtheorem{mapProof}[thm]{Map of the Proof}
 \newcommand{\eps}{\varepsilon}
 \newcommand{\To}{\longrightarrow}
 \newcommand{\h}{\mathcal{H}}
 \newcommand{\s}{\mathcal{S}}
 \newcommand{\A}{\mathcal{A}}
 \newcommand{\J}{\mathcal{J}}
 \newcommand{\M}{\mathcal{M}}
 \newcommand{\W}{\mathcal{W}}
 \newcommand{\X}{\mathcal{X}}
 \newcommand{\BOP}{\mathbf{B}}
 \newcommand{\BH}{\mathbf{B}(\mathcal{H})}
 \newcommand{\KH}{\mathcal{K}(\mathcal{H})}
 \newcommand{\Real}{\mathbb{R}}
 \newcommand{\Complex}{\mathbb{C}}
 \newcommand{\Field}{\mathbb{F}}
 \newcommand{\RPlus}{\Real^{+}}
 \newcommand{\Polar}{\mathcal{P}_{\s}}
 \newcommand{\Poly}{\mathcal{P}(E)}
 \newcommand{\EssD}{\mathcal{D}}
 \newcommand{\Lom}{\mathcal{L}}
 \newcommand{\States}{\mathcal{T}}
 \newcommand{\abs}[1]{\left\vert#1\right\vert}
 \newcommand{\set}[1]{\left\{#1\right\}}
 \newcommand{\seq}[1]{\left<#1\right>}
 \newcommand{\norm}[1]{\left\Vert#1\right\Vert}
 \newcommand{\essnorm}[1]{\norm{#1}_{\ess}}
 \newcommand{\beq}{\begin{equation}}
\newcommand{\eeq}{\end{equation}}
\newcommand{\rarr}{\rightarrow}
\newcommand{\cA}{\mathcal{A}}
\newcommand{\cS}{\mathcal{S}}
\newcommand{\cC}{\mathcal{C}}
\newcommand{\cU}{\mathcal{U}}
\newcommand{\cR}{\mathcal{R}}
\newcommand{\RMod}{R\text{-Mod}}
\newcommand{\AMod}{A\text{-Mod}}
\newcommand{\Rep}{\text{Rep}}
\newcommand{\Aut}{\text{Aut}}
\newcommand{\XAut}{\xi\Aut}
\newcommand{\Rtzn}{R \{\theta, z, n \}}
\newcommand{\TxC}{(\Theta, \xi, \cC)}
\newcommand{\Chir}{\text{Chir}}

\tableofcontents

\begin{abstract} For a rooted cluster algebra $\mathcal{A}(Q)$ over a valued quiver $Q$,  a \emph{symmetric cluster variable} is any cluster variable  belonging to a cluster  associated with a quiver $\sigma (Q)$, for some permutation $\sigma$. The subalgebra of $\mathcal{A}(Q)$ generated by all symmetric cluster variables is called the \emph{symmetric mutation subalgebra} and is denoted by $\mathcal{B}(Q)$. In this paper we identify the class of cluster algebras that satisfy  $\mathcal{B}(Q)=\mathcal{A}(Q)$, which contains  almost every quiver of  finite mutation type.  In the process of proving the main result, we provide a  classification of quivers mutations classes  that relates their maximum weights to the shapes of the initial quivers. Some properties of  symmetric mutation subalgebras are  given.

\end{abstract}

{\bf MSC (2010):}  Primary 13F60, Secondary  05E15.

{\bf Keywords:} Cluster Algebras,  Sub-cluster algebras, Mutations  subalgebras.

\section{Introduction}

Cluster algebras were introduced by S. Fomin and A. Zelevinsky in [5, 6, 2, 7, 15]. A cluster algebra is a commutative ring with a distinguished set of generators called cluster variables which appear in overlapping sets called clusters. Each cluster is paired with a (valued) quiver to form what is  called a seed. A new seed can be obtained from an existing  seed using particular type of relations called mutations. The set of all quivers that can be produced from a quiver $Q$  by applying mutations is called \emph{mutation class} of $Q$. The mutation class  plays a central role in understanding the structure of its associated  cluster algebra. In [6], S. Fomin and A. Zelevinsky proved that a cluster algebra has a finite set of seeds  if and only if each quiver in its  mutation class has weight less than or equal to three. Finding sufficient and/or equivalent conditions for the mutation class of a quiver  to be a finite set has always been an important goal. In [4] and [3]  the authors proved that the mutation class is finite if and only if each quiver in the mutation class is of weight less than or equal to four. Finite mutation classes have been studied and some of their characterizations have been revealed in many papers after that, including [12].  In [11, 9, 8, 12], the notions of mutation groups,  subseeds and subcluster algebras were introduced and studied.

Understanding  the ring-theoretical  structure of cluster algebras has not  received the attention it deserves. An approach toward  this direction is through studying the mutations groups and their possible relations to  the subalgebras of the associated cluster algebras. We introduce what we called \emph{mutations subalgebras}, which are subalgberas of the cluster algebra where each one is created by a subgroup of the mutation group. That is, each subgroup $H$ of the mutation group, generates a subset of cluster variables $\mathcal{X}_{H}$, where the algebra  generated by $\mathcal{X}_{H}$ is called \emph{mutation subalgebra} associated to $H$.

 Two seeds $(X, Q)$ and $(Y, Q')$ are said to be \emph{symmetric} seeds if the quivers $Q$ and $Q'$ are symmetric, i.e.,  there is a permutation $\sigma$ such that $Q'=\sigma(Q)$, see [13] under $\sigma$-similar. The subgroup of the mutation group, generated by all sequences of mutations which produce seeds that are symmetric to the initial seed, is called the \emph{symmetric mutation subgroup}. The corresponding mutation subalgebra is called the \emph{ symmetric mutation subalgebra}. The main purpose of this paper is  to answer the following question.
\begin{que}
For which quivers, the symmetric mutation subalgebra coincides with the whole cluster algebra?
\end{que}
  We provide a precise answer for this question, by proving that ``the class of valued quivers where  every cluster variable is symmetric, is  identified with almost the whole  class of  finite mutation quivers", see Theorem 3.19. The main motive of this question  was trying to use the mutations classes to help understand the ``algebraic" structure of the associated cluster algebra.  The paper has the following two other aims.
   \begin{enumerate}
     \item [A.] Without the need to obtain the whole mutation class of a valued quiver $Q$, can we predict whether $Q$ is of finite type or not?  see Lemma 3.11.
     \item [B.] Exploring particular types of generalized  ``local" non-trivial \emph{inverse mutation} sequences.  For  details see  Proposition 3.15 and Lemma 3.18. 
   \end{enumerate}

All the proofs  are written using the  valued quivers version of the  definition of skew-summarizable matrices mutation, see Definition 2.2 and Remarks 2.3.

The paper is organized as follows: In the second section we provide a brief introduction to valued quivers and their rooted cluster algebras. The third section contains an introduction to symmetric mutation subalgebra and a statement that characterizes  mutations classes based on their weights and the shape of the initial quiver, Lemma 3.11. In the same section we also prove Theorem 3.19 which provides an answer for Question 1.1 by giving a classification for quivers which all their cluster variables are symmetric. Finally we provide a statement that gives some elementary properties about the structure of symmetric mutation subalgebras.

 Throughout the paper, $K$ is a field of zero characteristic and the notation $[1, k]$ stands for the set $\{1,\ldots, k\}$. All quivers are valued quivers and all cluster algebras are rooted. The filed of rational functions in $n$ independent indeterminists over $K$ is given by $\mathcal{F}$.


\section{Rooted Cluster Algebras}

\begin{defns}
\begin{enumerate}
  \item \emph{An oriented valued quiver} of rank $n$ is a quadruple $Q=(Q_{0}, Q_{1}, V, d)$, where
  \begin{itemize}
 \item $Q_{0}$ is a  set of $n$ vertices labeled by $[1,n]$.
  \item  $Q_{1}$ is a set of ordered pairs of vertices, that is $Q_{1}\subset Q_{0}\times Q_{0}$ such that; $(i, i)\notin Q_{1}$ for every $i\in Q_{0}$, and if $(i, j)\in Q_{1}$, then $(j, i)\notin Q_{1}$.
  \item $V=\{(v_{ij},v_{ji})\in \mathbb{N}\times\mathbb{N} | (i,j)\in Q_{1}\}$, $V$ is called the valuation of $Q$. The weight of an edge $\alpha=(i, j)$ is the product $v_{ij}v_{ji}$ and will be denoted by $w(\alpha)$ or $w_{i,j}$.
  \item $d=(d_{1},\cdots, d_{n})$ where $d_{i}$ is a positive integer for each $i$, such that $d_{i}v_{ij}=v_{ji}d_{j}$ for every $i, j\in [1, n]$. This condition will be called the consistency condition.
  \item
  In the case of $(i,j)\in Q_{1}$, then there is an arrow oriented from $i$ to $j$ and in notation we shall use the symbol $\xymatrix{{\cdot}_{i} \ar[r]^{(v_{ij},v_{ji})}&{\cdot}_{j}}$ and if  the emphasis is on the weight of the edge we use $\xymatrix{{\cdot}_{i} \ar[r]^{w_{i,j}}&{\cdot}_{j}}$.
  \item
  We will also use $rk(Q)$ for rank of $Q$.
   \item
   If $w_{i, j}\leq 1$ for every two vertices $i$ and $j$ in $Q$, then we call $Q$ a \emph{simply-laced quiver}.
    \item
    An oriented $3$-cycle that has two edges with the same weights will be called \emph{isosceles $3$-cycle} and if all three edges are with equal weights  then the  $3$-cycle will be called an \emph{equilateral $3$-cycle.}
 \end{itemize}
\item For $i\in Q_{0}$, we define $Nhb.(i)=\{j \in Q_{0}; w_{i,j}\neq 0 \}$.

\item
  A quiver $Q$ is called a \emph{zigzag quiver}, if  every vertex $i\in Q_{0}$, is one of the following three cases

   \begin{enumerate}
   \item  a leaf, i.e., $Nhb.(i)$ contains exactly one vertex,
     \item a source, i.e.,  $Nhb.(i)$ has the form   $\xymatrix{{\cdot} &\ar[l]{\cdot}_{i} \ar[r]&{\cdot}}$, or
     \item a target, i.e.,   $Nhb.(i)$ has the form $\xymatrix{{\cdot}\ar[r] &{\cdot}_{i} &\ar[l]{\cdot}}$.
   \end{enumerate}

\item We use $-Q$ for the valued quiver obtained from $Q$ by reversing all the arrows and valuations.

  \item  A valued quiver $Q'$ is called \emph{symmetric} to $Q$ if  there is a permutation $\tau$ such that $Q'$ can be obtained from $Q$ by permuting the vertices of $Q$ using $\tau$ such that for every edge $\xymatrix{{\cdot}_{i} \ar[r]&{\cdot}_{j}}$ in $Q$,   the valuation $(v_{ij}, v_{ji})$ is assigned  to the edge $\xymatrix{{\cdot}_{\tau(i)} \ar[r]&{\cdot}_{\tau(j)}}$ in $Q'$.

  \item \emph{A seed } in $\mathcal{F}$ of rank $n$  is a pair $(\widetilde{X}, \widetilde{Q})$, where
   \begin{enumerate}
     \item The $n+m$-tuple $\widetilde{X}=(x_{1},\ldots, x_{n}, x_{n+1}, \ldots, x_{n+m})$ is called an \emph{extended cluster} such that
 $X=(x_{1}, x_{2},\ldots,x_{n})\in \mathcal{F}^n$ is a transcendence basis of of $\mathcal{F}$ over $K[x_{n+1},\ldots, x_{n+m}]$  that generates $\mathcal{F}$ which is called a \emph{cluster}. Elements of $X$ will be called \emph{cluster variables} and the elements $\{x_{n+1},\ldots, x_{n+m}\}$ will be called \emph{frozen variables};
     \item $\widetilde{Q}$ is an oriented valued quiver with $n+m$ vertices such that it has a connected subquiver $Q$ of rank $n$. The vertices of  $Q$ are labeled by numbers from $[1, n]$ which will be called exchangeable vertices;

     \item Each element of $\widetilde{X}$ is assigned to a vertex in $\widetilde{Q}$ such that the cluster variables are assigned to the vertices of the subquiver $Q$.
   \end{enumerate}

\end{enumerate}
\end{defns}
We note that every oriented valued quiver $\widetilde{Q}=(Q_{0}, Q_{1}, V, d)$ corresponds to a skew symmetrizable matrix   $B(\widetilde{Q})=(b_{ij})$ given by
\begin{equation}\label{}
  b_{ij}=\begin{cases} v_{ij}, & \text{ if }(i,j)\in Q_{1},\\
    0, & \text{ if }i=j,\\
-v_{ij}, & \text{ if }(j,i)\in Q_{1}.
    \end{cases}
\end{equation}
One can also see that; every skew symmetrizable matrix $B$ corresponds to an oriented valued quiver $\widetilde{Q}$ such that $B(\widetilde{Q})=B$, see [10] for more details.

 All our valued quivers are oriented, so in the rest of the paper  we will omit the word ``oriented". If no confusion, from time to time  we will use $Q$ for $\widetilde{Q}$ and $X$ for $\widetilde{X}$. All quivers are of rank $n$ unless stated otherwise.

   \begin{defn}[\emph{Valued quiver mutations}]
 Let $\widetilde{Q}$ be a valued quiver. The mutation  $\mu_{k}(\widetilde{Q})$ at a vertex  $k$  is defined  through Fomin-Zelevinsky's mutation of the associated skew-symmetrizable matrix. The mutation of a skew symmetrizable matrix $B=(b_{ij})$ on the direction $k\in [1, n]$ is given by $\mu_{k}(B)=(b'_{ij})$, where
\begin{equation}
b'_{ij}=\begin{cases} -b_{ij}, & \text{if} \ k \in \{i,j\},\\
   b_{ij}+\text{sign}(b_{ik})\max(0, b_{ik}b_{kj}), & \text{otherwise.}
   \end{cases}
  \end{equation}

\end{defn}
The following remarks  provide an  adequate set of  rules to  calculate  mutations of  valued quivers without using their associated skew-symmetrizable matrix.

\begin{rems}
\begin{enumerate}

  \item  Let $\widetilde{Q}=(Q_{0}, Q_{1}, V, d)$ be a valued quiver. The mutation  $\mu _{k}(\widetilde{Q})=(Q_{0}, Q'_{1}, V', d)$ at the  vertex  $k$   is described using the mutation of $B(\widetilde{Q})$ as follows: we obtain $Q'_{1}$ and $V'$,   by altering  $Q_{1}$ and $V$, based on the following rules
\begin{enumerate}
 \item replace the  pairs $(i, k)$ and $(k,j)$  with $(k,i)$ and $(j,k)$  respectively and switch the components of the ordered pairs of their valuations;
  \item if  $(i,k), (k,j)\in Q_{1}$, such that neither of $(j,i)$  or $(i,j)$ is in $Q_{1}$ (respectively $(i,j)\in Q_{1}$) add the  pair $(i, j)$ to $Q'_{1}$, and give it the valuation $(v_{ik}v_{kj},v_{ki}v_{jk})$ (respectively change its valuation to $(v_{ij}+v_{ik}v_{kj},v_{ji}+v_{ki}v_{jk})$);
 \item if $(i,k)$, $(k,j)$ and $(j, i)$ in $Q_{1}$, then we have three cases
 \begin{enumerate}
   \item if $v_{ik}v_{kj}<v_{ij}$, then keep $(j,i)$ and change its valuation to $(v_{ji}-v_{jk}v_{ki}, -v_{ij}+v_{ik}v_{kj})$;
   \item if $v_{ik}v_{kj}>v_{ij}$, then replace $(j,i)$ with $(i,j)$ and change its valuation to $( -v_{ij}+v_{ik}v_{kj}, |v_{ji}-v_{jk}v_{ki}|)$;
   \item if $v_{ik}v_{kj}=v_{ij}$,  then remove $(j,i)$ and its valuation.
 \end{enumerate}
 \item $d$ will stay the same in $\mu _{k}(\widetilde{Q})$.
\end{enumerate}
 \item The mutation of valued quiver is again a valued quiver.
 \item One can see that; $\mu^{2}_{k}(\widetilde{Q})=\widetilde{Q}$ and  $\mu_{k}(B(\widetilde{Q}))=B(\mu_{k}(\widetilde{Q}))$ at each vertex  $k\in [1, n]$ where $\mu_{k}(B(\widetilde{Q}))$ is the mutation on the matrix $B(\widetilde{Q})$. For more information on mutations of skew-symmetrizable matrices see for example [6, 15, 10].
 \end{enumerate}
\end{rems}

\begin{defn}[Seed mutation] Let $(\widetilde{X}, \widetilde{Q})$ be a seed in $\mathcal{F}$. For each fixed $k\in [1,n]$,
 we define a new seed $\mu_{k}(\widetilde{X},\widetilde{Q})=(\mu _{k}(\widetilde{X}), \mu _{k}(\widetilde{Q}))$ by  setting $\mu _{k}(\widetilde{X})=(x'_{1}, \ldots, x'_{n}, x_{n+1}, \ldots, x_{n+m})$  where
\begin{equation}\label{}
   x'_{i}=\begin{cases} x_{i}, & \text{ if } i\neq k,\\
    \frac{ \prod\limits_{b_{ji}> 0} x_{j}^{b_{ji}}+
    \prod\limits_{b_{ji}< 0} x_{j}^{-b_{ji}}}{x_{i}}, & \text{ if }i=k.
    \end{cases}
\end{equation}

And $\mu _{k}(\widetilde{Q})$  is the mutation of $\widetilde{Q}$ at the exchangable vertex $k\in [1, n]$.

   \end{defn}

\begin{defns}[Cluster structure and cluster algebra]
\begin{itemize}
  \item The set of all seeds obtained by applying all possible sequences of mutations on the seed $(\widetilde{X}, \widetilde{Q})$ is called the  \emph{cluster structure} of $(\widetilde{X}, \widetilde{Q})$.
       \item
      The set of all quivers appear in the cluster structure of $(\widetilde{X}, \widetilde{Q})$ is called the \emph{mutation class} of $Q$ and will be denoted by $[Q]$.

        \item Let $\mathcal{X}$ be the union of all clusters in the cluster structure of $(\widetilde{X}, \widetilde{Q})$. The  \emph{rooted cluster algebra} $\mathcal{A}(Q)$ is the $\mathbb{Z}[x_{n+1}, \ldots, x_{n+m}]$-subalgebra of $\mathcal{F}$ generated by $\mathcal{X}$. For simplicity we will omit the word ``rooted".
      \item  Let $\mathcal{S}_{n}$ be the symmetric group in $n$ letters. One can introduce an action of  $\mathcal{S}_{n}$ in the set of quivers of rank $n$ as follows: for a permutation $\tau$, the quiver $\tau(Q)$  is obtained from $Q$ by permuting the vertices of $Q$ using $\tau$ such that for every edge $\xymatrix{{\cdot}_{i} \ar[r]&{\cdot}_{j}}$ in $Q$,   the valuation $(v_{ij}, v_{ji})$ is assigned  to the edge $\xymatrix{{\cdot}_{\tau(i)} \ar[r]&{\cdot}_{\tau(j)}}$ in $\tau(Q)$.

\end{itemize}
\end{defns}
One can see that any seed in the cluster structure of  $(\widetilde{X}, \widetilde{Q})$ generates the same cluster structure.

\begin{defn}(\textbf{Cluster pattern of $\mathcal{A}(Q)$} [7]).
The cluster pattern $\mathbb{T}_{n}(Q)$ of the cluster algebra $\mathcal{A}(Q)$ is a regular $n-$ary tree whose edges are labeled by the numbers $1,2, \ldots, n$ such that the $n$ edges emanating from each vertex receive different labels. The vertices are assigned to be the elements of the cluster structure of  $\mathcal{A}(Q)$ such that the endpoints of any edge are obtained from each other by seed mutation in the direction of the edge label.
\end{defn}
One can see that the cluster pattern of $\mathcal{A}(Q)$ can be completely determined by any seed in the cluster structure.
\begin{defn}
A cluster algebra, $\mathcal{A}(Q)$, is called of \emph{finite type},  if the set of all cluster variables $\mathcal{X}$ of $Q$ is finite. A quiver $Q$ is called of \emph{finite mutation type} if the mutation class $[Q]$ contains finitely many quivers.
\end{defn}

     \begin{exams}
     \begin{enumerate}
       \item Let
     \begin{equation}\label{}
     \widetilde{Q}=\xymatrix{ \cdot_{3_{1}}& \cdot_{3}   \ar[l]_{(2,3)}\ar[r]^{(2,3)}&   \cdot_{2}\ar[d]^{(1,2)}&\cdot_{2_{1}.}\ar[l]_{(2,1)}\\
  &\cdot_{1_{1}} \ar[r]&\cdot_{1}\ar[ul]^{(6,2)}\ar[r]^{(2,3)}&\cdot_{1_{2}} }
     \end{equation}
     Here, the subquiver, with the exchangeable vertices,  $Q$ is the quiver formed from the vertices   $1, 2$ and $3$ with $d=(1, 2, 3)$. So $rk(\widetilde{Q})=3$.
      Applying mutation at the vertex  $2$, produces the following  quiver

  \begin{equation}\label{}
      \nonumber  \mu_{2}(\widetilde{Q})=\xymatrix{ \cdot^{3_{1}}& \cdot_{3}   \ar[l]_{(2,3)}&   \cdot_{2} \ar[l]_{(3,2)}\ar[r]^{(1,2)}&\cdot_{2_{1}}\ar[dl]^{(2, 2)}\\
  &\cdot_{1_{1}} \ar[r]&\cdot_{1}\ar[u]^{(2,1)}\ar[r]_{(2,3)}&\cdot_{1_{2}}.}
     \end{equation}

\item Consider the quiver
       $Q=\xymatrix{ \cdot_{n}& \cdot_{n-1}\ar[l]& \cdots \ar[l]& \cdot_{2}\ar[l]&\cdot_{1}\ar[l]_{(2, 1)} }$. One can see that
       $\mu_{n}\mu_{n-1}\cdots\mu_{3}\mu_{2}(Q)=\xymatrix{ \cdot_{1}& \cdot_{n}\ar[l]_{(1, 2)}& \cdots \ar[l]& \cdot_{3}\ar[l]&\cdot_{2}\ar[l] }$.

\item The following  example is for a simply-laced quiver, yet it has an infinite mutation class
\begin{equation}
 \nonumber   \xymatrix{\cdot_{k}\ar@{-}[dr]\ar[dr]\\
 \cdot_{v}  \ar[u]\ar[dr]\ar[r]& \cdot_{j} \\
 & \ar[u]\cdot_{i}} \  \ \bigskip
\xRightarrow{\mu_{i}\mu_{k}} \ \xymatrix{\cdot_{k}\ar@{-}[dr]\ar[dr]\\
 \cdot_{j}  \ar[u]\ar[dr]\ar[r]^{(3,3)}& \cdot_{v} \\
 & \ar[u]\cdot_{i}}
\end{equation}

     \end{enumerate}
  \end{exams}
\begin{defns}[Subseeds and subcluster algebras]
\begin{enumerate}

  \item  Let $\widetilde{Q}$ be a quiver of rank $n$, with total $m$  vertices and $\widetilde{X}=F\cup X$ be its extended cluster  with   a set of  frozen variables $F$ that has $m-n$ elements. For a set $I\subseteq [1, n]$,  a pair $(\widetilde{Y}, \widetilde{Q}_{I})$ is obtained from the seed $(\widetilde{X}, \widetilde{Q})$ by converting the set of the cluster variables, labeled by the vertices of  $I$, into frozen variables. Where $\widetilde{Y}=F'\cup X'$ where $F'=F\cup I$  and $X'=X\backslash I$ are  the set of frozen variables and the cluster of  $(\widetilde{Y}, \widetilde{Q}_{I})$ respectively. The pair $(\widetilde{Y}, \widetilde{Q}_{I})$ is called  a \emph{subseed} of $(\widetilde{X}, \widetilde{Q})$. Here, $(\widetilde{Y}, \widetilde{Q}_{I})$ is of rank $l=n-|I|$ as a seed and $\widetilde{Q}_{I}=\widetilde{Q}$ as a quiver. We will use $\widetilde{Q}_{I}\leq \widetilde{Q}$ to say $\widetilde{Q}_{I}$ is a subquiver of $\widetilde{Q}$. The exchangeable vertices of $Q_{I}$ are labeled by  $[1, n]\backslash I$.

      \item
      A subquiver $Q_{I}$ is said to be of $A$-type if its mutation class  $[Q_{I}]$ contains a quiver whose exchangeable vertices, $[1, n]\backslash I$, form a subquiver   with underlying  graph  of $A_{n}$-type  such that $w[Q_{I}]=1$.

  \item The cluster algebra $\mathcal{A}(Q_{I})$ of $(\widetilde{Y}, \widetilde{Q}_{I})$ is called a \emph{subcluster algebra} of $\mathcal{A}(Q)$.

  \item We say that the quiver $\widetilde{Q}$ can be \emph{decomposed coherently} into two subquivers $\hat{Q}$ and $\breve{Q}$, if $\widetilde{Q}$ can be formed from $\hat{Q}$ and $\breve{Q}$ by connecting them at one or more (overlapping) vertices. In such case we write $\widetilde{Q}=\hat{Q}\odot\breve{Q}$. In other words, if $\widetilde{Q}=\hat{Q}\odot\breve{Q}$ then $\hat{Q}=\widetilde{Q}_{\breve{Q}_{0}}$ and  $\breve{Q}=\widetilde{Q}_{\hat{Q}_{0}}$, where $\breve{Q}_{0}$ and $ \hat{Q}_{0}$ are the set of vertices in $\breve{Q}$ and $\hat{Q}$ respectively.
\end{enumerate}

\begin{exam} Consider the Quiver (2.4) in Example 2.8. We have $\widetilde{Q}=\hat{Q}\odot\breve{Q}$, where  $\hat{Q}_{0}=\{3_{1}, 3, 2, 2_{1}\}$ and $\breve{Q}_{0}=\{1_{1},1, 1_{2}\}$. One can see that the coherent composition of $\widetilde{Q}$  is not unique.
\end{exam}

\end{defns}

\section{Symmetric mutation sub-algebras}

\begin{defns}
\begin{enumerate}
\item
Fix a quiver $Q$ of rank $n$. Consider the set $\mathcal{M}(Q)$ of all reduced words formed from the single mutations $\mu_{1}, \ldots, \mu_{n}$ on $Q$. The group generated by the elements of $\mathcal{M}(Q)$ will  be called \emph{the mutation group} of $Q$ and it will also be denoted  by $\mathcal{M}(Q)$. The group relations of $\mathcal{M}(Q)$ are due to the applications of the mutations sequences on $Q$.
\item
An element $\mu \in \mathcal{M}(Q)$ is called a \emph{symmetric mutation sequence} on $Q$ if $\mu(Q)=\tau (Q)$ for some permutation $\tau$. The subgroup of $\mathcal{M}(Q)$ generated by all symmetric mutations on $Q$ will be called the \emph{symmetric mutation subgroup} of $Q$ and will be denoted by $\mathbf{B}(Q)$.

\item  Let $\mathcal{X}_{\mathbf{B}}$ be the set of all cluster variables generated by applying the elements of $\mathbf{B}(Q)$  on the seed $(\widetilde{X}, \widetilde{Q})$. The elements of  $\mathcal{X}_{\mathbf{B}}$ will be called \emph{symmetric cluster variables}.
    The subalgebra $\mathcal{B}(Q)$ generated by $\mathcal{X}_{\mathbf{B}}$ over $\mathbb{Z}[x_{n+1}, \ldots, x_{m+n}]$ will be called the \emph{symmetric mutation subalgebra} of $\mathcal{A}(Q)$. Moreover, The cluster algebra $\mathcal{A}(Q)$ will be called \emph{symmetric }if and only if $\mathcal{A}(Q)=\mathcal{B}(Q)$.
\end{enumerate}
\end{defns}

\begin{ques}
\begin{enumerate}

\item Find sufficient conditions  on a quiver $Q$  where $\mathcal{B}(Q)$ is a subcluster algebra of $\mathcal{A}(Q)$, i.e., there is a subquiver $Q'$ of $Q$ such that $\mathcal{A}(Q')=\mathcal{B}(Q)$?
\item
Are there any significant  relations between the subgroup $\mathbf{B}(Q)$ and the set of all elements of  the group $\mathcal{M}(Q)$ with finite order?

\end{enumerate}
\end{ques}

We provide some partial answer for the first question in this paper by identifying sufficient and equivalent conditions for the cluster algebra $\mathcal{A}(Q)$ to be symmetric, i.e., $\mathcal{A}(Q)=\mathcal{B}(Q)$.
\begin{exams} \begin{enumerate}
\item Each of the following quivers generates a symmetric cluster algebra

\begin{equation}
 \nonumber \  \ (a) Q: \xymatrix{
	\cdot_{i}  & \ar[l]\cdot_{j} }  \ \  \  \  \ \ \  \  \  \ \ \ \ \ \ \ \ \ \ \  \ \ \ \ \  \ \ \ \ \ \ \ \ \ \ \ \ \ \ \ \ \
  (b)Q: \xymatrix{
	\cdot_{i} \ar[d]_{(2, 2)} & \ar[l]_{(2, 2)}\cdot_{j} \\
	\cdot_{k}  \ar[ur]_{(2. 2)}  }
 \end{equation}
 \item
 The following quiver satisfies that $\mathcal{B}(Q)=\mathcal{A}(Q')$, where $\widetilde{Q}'=\widetilde{Q}_{I}$ and  $I=\{i, j, k\}$

\begin{equation}
 \  \xymatrix{\cdot_{2}\ar@{-}[r]&\cdot_{1} \\
\cdot_{i}\ar@{-}[ru] \ar[d]_{(3, 3)} & \ar[l]_{(3,2)}\cdot_{k} \\
	\cdot_{j}  \ar[ur]_{(2. 3)}}
 \end{equation}

\item\textbf{\emph{ Rigid Vertices}}. The following quivers satisfy that $\mathcal{B}(Q)\subsetneq\mathcal{A}(Q)$

\begin{equation}
 (a)    \  \xymatrix{\cdot_{2}\ar@{-}[r]^{z}&\cdot_{1} \\
\cdot_{\textbf{i}}\ar@{-}[ru]^{y} \ar[d]_{2} & \ar[l]_{2}\cdot_{k} \\
	\cdot_{j}  \ar[ur]_{(2. 2)}}  \  \  \  \ \ \  \  \  \ \ \ \ \ \ \ \ \ \ \  \ \ \ \ \  \ \ \ \ \ \ \ \ \ \ \ \ \ \ \ \ \ \ \ \
 (b)  \  \xymatrix{\cdot_{k}\ar@{-}[dr]\ar[dr]\\
 \cdot_{v}  \ar[u]\ar[dr]_{2}& \cdot_{j}\ar[l]_{(2, 2)} \\
 & \ar[u]_{2}\cdot_{\textbf{i}}}
\end{equation}
 Where $(y, z)\in \{(1,0), (1, 1), (0, 0)\}$. A good exercise for the reader is to show that $\mu_{\textbf{i}}(x_{\textbf{i}})$  is not a  symmetric  cluster variable.

 \textbf{\emph{ Definition of Rigid Vertices.}}  A quiver $Q$ is said to have a \emph{rigid vertex}, if it contains a ``connected"  subquiver that is symmetric to any one of the four quivers in (3.2)  and we will refer to such vertex, $\textbf{i}$,  by a \emph{\textbf{rigid vertex}} of $Q$.

One can see that a  quiver $Q$ could have more than one rigid vertex.
\end{enumerate}
\end{exams}

The main purpose of the paper is to show that, a quiver $Q$ has a non-symmetric cluster variables if and only if $Q$ contains rigid vertices. 

\begin{defns}
\begin{enumerate}

\item We define \emph{the  weight of} a quiver $Q$, denoted by $w(Q)$,  to be $max\{w(\alpha); \alpha \ \text{is an edge in} \ Q \}$ the maximum weight of the edges of $Q$. If the class $[Q]$ is finite  then  the number $w[Q]=max\{w(Q'); Q'\in [Q]\}$  will be called  the \emph{weight of the mutation class} $[Q]$.
\item An edge $[ij], \xymatrix{\cdot_{i}\ar@{-}[r]^{w_{i,j}}&\cdot_{j}} $ in $Q$ is called a \textit{heavy weight} if $w_{i,j} \geq 5$, and it will be called a \emph{blocking edge} if the permutation  $(ij)$ is not in $\mathcal{M}(Q)$, i.e., there is \textbf{no} sequence of mutations $\mu \in \mathcal{M}(Q)$ such that $(ij)(Q)=\mu(Q)$.

\item A subgraph $P(Q)$ of  a rooted cluster pattern $\mathbb{T}(Q)$   is called a \emph{rooted path} if  it contains the vertex that is associated to the initial seed $(X, Q)$.

		\item A quiver $Q$ is said to be  \emph{unbounded} if it has a $3$-cycle  sub-quiver $Q^{0}$ with vertices $\{i, j, k\}$ such that for any  $y\neq x \in \{i, j, k\}$  we have
		\begin{equation}
	\nonumber	w (\mu_{x}(\mu_{y}(Q^{0})))> w(\mu_{y}(Q^{0})).
		\end{equation}
	And  $Q$ will be called \emph{pre-unbounded}  if there is a sequence of mutations $\mu$ such that $\mu(Q)$ is unbounded. An example of a pre-unbounded quiver is  the quiver in Part 3 of  Examples 2.8.
	\end{enumerate}
\end{defns}

\begin{thm}  Let $Q$ be a quiver. Then $Q$ is of finite mutation type, i.e., the mutation class $[Q]$ is a finite set if and only if  $w[Q]\leq 4$, for details see [3, 4].

\end{thm}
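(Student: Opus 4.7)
The proof naturally splits into two directions. For the forward direction, suppose $[Q]$ is finite and aim for $w[Q]\leq 4$ by contrapositive: starting from any $Q'\in[Q]$ that contains an edge of weight $\geq 5$, I would construct infinitely many pairwise non-isomorphic quivers in $[Q]$. The key tool is the mutation rule (c) of Remarks 2.3 applied to an oriented $3$-cycle on $\{i,j,k\}$: when $v_{ik}v_{kj}>v_{ij}$, mutating at $k$ produces an edge of valuation $(-v_{ij}+v_{ik}v_{kj},\,|v_{ji}-v_{jk}v_{ki}|)$, whose weight strictly exceeds the old one once any defining valuation is large enough; iterating along the pattern of Definition 3.4 forces the quiver to be unbounded. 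If the heavy edge is not already part of a $3$-cycle, I would first apply one mutation at a common neighbor (rule (b) of Remarks 2.3) to place it into one, falling back on the classical rank-$2$ theory in the degenerate case where no common neighbor exists, since a rank-$2$ quiver of weight $\geq 5$ has an infinite exchange pattern.

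For the backward direction, suppose $w[Q]\leq 4$ and aim for finiteness of $[Q]$. The strategy is local-to-global: for any $Q'\in[Q]$ and any pair of vertices $i,j$, the valuation $(v_{ij},v_{ji})$ is constrained by the consistency condition $d_iv_{ij}=v_{ji}d_j$ together with $v_{ij}v_{ji}\leq 4$, so only finitely many choices are available per edge. Next I would bound the number of edges incident to each vertex by analyzing rank-$3$ subquivers under mutation (again using Remarks 2.3 to rule out configurations that would force weight $\geq 5$ after one mutation), and then bootstrap from the bounded local complexity to a global bound on the number of isomorphism classes in $[Q]$. This is the classification-type argument carried out in detail in [3, 4].

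The main obstacle is plainly the backward direction: the forward direction is essentially a self-contained calculation showing why weight $5$ is the critical threshold, whereas the backward direction is genuinely a classification theorem. Bounded edge weights do not a priori preclude unbounded growth in the combinatorial structure of $[Q]$, and ruling this out requires a case analysis of small-rank subquivers together with propagation via the consistency condition. Because the result is attributed to [3, 4], I would present the $3$-cycle mechanism in full and cite those works for the enumeration underlying the backward implication.
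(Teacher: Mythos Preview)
The paper does not prove Theorem 3.5; it is quoted from [3, 4] with no argument supplied. So there is nothing in the paper to compare your sketch against, and the proposal has to be assessed on its own.

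\emph{The rank-$2$ fallback is wrong.} A rank-$2$ valued quiver has mutation class $\{Q,-Q\}$ regardless of the weight of its unique edge, because mutation at either endpoint merely reverses the arrow (Remarks 2.3(1)(a)); there is no third vertex to trigger rules (b) or (c). You have confused finite \emph{mutation} type (Definition 2.7: $[Q]$ finite) with finite \emph{type} ($\mathcal{X}$ finite): a rank-$2$ quiver of weight $\geq 5$ has infinitely many cluster variables but only two quivers in its mutation class. This is exactly why the statement, as phrased, tacitly needs $rk(Q)\geq 3$, which is indeed the standing hypothesis in [3, 4].

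\emph{You have the two directions' difficulty reversed.} The backward implication ($w[Q]\leq 4 \Rightarrow [Q]$ finite) is an immediate counting argument, and your own first sentence for that direction already contains the whole proof: $d$ is mutation-invariant (Remarks 2.3(1)(d)), and with $d$ fixed there are only finitely many pairs $(v_{ij},v_{ji})$ with $v_{ij}v_{ji}\leq 4$; since there are $\binom{n}{2}$ vertex pairs, only finitely many valued quivers on $[1,n]$ satisfy the constraint, hence $[Q]$ is finite. (This is precisely the counting the paper invokes in the $(1)\Rightarrow(2)$ step of Lemma 3.6.) Your subsequent talk of bounding vertex degrees and a rank-$3$ case analysis is unnecessary, and nothing here is a ``classification''. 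The direction with actual content is the forward one for $rk(Q)\geq 3$: showing that a single edge of weight $\geq 5$ forces unbounded growth. Your $3$-cycle mechanism is the right idea there, but it is that direction---not the backward one---that carries the work attributed to [3, 4].
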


The main purpose of Lemma 3.6  is to characterise the notion of    unbounded and pre-unbounded  quivers introduced in Definition 3.4 Part 4. Which would  help  to develop a technique to use in predicting the future of a given quiver $Q$; that is whether $Q$  will develop an infinite mutation type quiver or not.
\begin{lem} For a quiver $Q$, the  following are equivalent
	
	\begin{enumerate}
		\item [(1)] $Q$ is of an  infinite mutation type.
		\item [(2)]  There is a rooted path $P(Q)$, such that the set of  weights  $\{\omega(Q'); Q' \in P(Q) \}$ is unbounded.
	   		\item [(3)]  There is a quiver $Q'\in [Q]$ which contains pre-unbounded  subquiver.
  \item [(4)] There is a quiver $Q'\in [Q]$ which contains a $3$-cycle that is not isosceles.
	\end{enumerate}
	
\end{lem}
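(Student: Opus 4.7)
The plan is to prove the four equivalences via a cyclic chain of implications $(2)\Rightarrow(1)\Rightarrow(4)\Rightarrow(3)\Rightarrow(2)$, using Theorem 3.5 as the pivot and the local mutation rules of Remarks 2.3 as the only computational tool. The implication $(2)\Rightarrow(1)$ is immediate: if a rooted path $P(Q)$ has unbounded weights, then in particular some $Q'\in[Q]$ satisfies $w(Q')>4$, so by Theorem 3.5 the class $[Q]$ is infinite.

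For $(3)\Rightarrow(2)$, I would argue as follows. Suppose $Q'\in[Q]$ contains a pre-unbounded subquiver $R$. Because the rules (a)--(c) of Remarks 2.3 only involve the neighborhood of the vertex being mutated, mutations inside $R$ --- viewed as a subquiver of $Q'$ --- coincide with mutations of $R$ taken in isolation. Hence there is $\mu \in \mathcal{M}(Q')$ such that $\mu(Q')$ contains an unbounded $3$-cycle $Q^{0}$ on some vertices $\{i,j,k\}$. By the definition of ``unbounded'', iteratively alternating mutations at two of these three vertices strictly raises $w(Q^{0})$ at every step, and concatenating any rooted path from $Q$ to $\mu(Q')$ with this infinite alternation produces a rooted path in $\mathbb{T}_{n}(Q)$ whose weights tend to infinity. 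For $(4)\Rightarrow(3)$, I would show directly from the rules of Remarks 2.3 that a non-isosceles $3$-cycle with pairwise distinct edge-weights $(a,b,c)$ is itself \emph{unbounded}. A case analysis over the three subcases of rule (c) yields a new $3$-cycle whose maximum edge-weight strictly exceeds the old maximum and whose weight triple is again non-isosceles, so the process persists and the $3$-cycle is an unbounded (in particular, pre-unbounded) subquiver of $Q'$, giving (3).

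The deepest implication, which I expect to be the main obstacle, is $(1)\Rightarrow(4)$. I would argue by contrapositive: if no quiver in $[Q]$ contains a non-isosceles $3$-cycle, then every $3$-cycle appearing anywhere in $[Q]$ is isosceles or equilateral; I would then show that this restriction forces $w[Q]\leq 4$, whence Theorem 3.5 immediately gives the finiteness of $[Q]$, contradicting (1). The intuition is that a would-be heavy edge of weight $\geq 5$ can only be produced or preserved by rule (b) or rule (c) of Remarks 2.3, and those rules, applied to already-symmetric (isosceles or equilateral) local configurations, cannot generate edges of weight exceeding $4$ without creating a non-isosceles triple somewhere. The main technical effort is the bookkeeping: one must run through each sign pattern and each valuation configuration incident to a hypothetical heavy edge, in both the ``$V$''-shape case of rule (b) and all three subcases of rule (c), and verify that a non-isosceles $3$-cycle must eventually appear. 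This combinatorial case check is where essentially all the work of the proof sits; once it is carried out, the cyclic chain of implications closes and the four conditions are equivalent.
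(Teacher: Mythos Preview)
Your chain runs in the opposite direction from the paper's ($(1)\Rightarrow(2)\Rightarrow(3)\Rightarrow(4)\Rightarrow(1)$), and this reversal creates two real problems.

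First, your $(4)\Rightarrow(3)$ step is wrong as written. You assert that a non-isosceles $3$-cycle is itself \emph{unbounded} in the sense of Definition~3.4. Take the minimal non-isosceles oriented $3$-cycle, with valuations $(2,1),(1,3),(3,2)$ and weights $\{2,3,6\}$ (this is exactly the triple the paper identifies in its $(4)\Rightarrow(1)$). Mutating at the vertex opposite the weight-$6$ edge breaks the cycle entirely (the edge is deleted by case (c)(iii) of Remarks~2.3), and mutating at the vertex opposite the weight-$3$ edge leaves the weight multiset unchanged. So Definition~3.4 fails: there exist $y\ne x$ with $w(\mu_x\mu_y(Q^0))<w(\mu_y(Q^0))$. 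The cycle is still \emph{pre}-unbounded, but establishing that requires locating a specific mutant that does satisfy the definition --- which is not what your case analysis produces. Relatedly, your $(3)\Rightarrow(2)$ over-reads Definition~3.4: the inequality $w(\mu_x\mu_y(Q^0))>w(\mu_y(Q^0))$ is a statement about $Q^0$ only, not about its mutants, so ``iteratively alternating'' is not justified without further argument that unboundedness persists.

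Second, you have moved all the work into $(1)\Rightarrow(4)$, proposing to show by case analysis that ``every $3$-cycle in $[Q]$ is isosceles'' forces $w[Q]\le 4$. The paper avoids this entirely: it proves the \emph{reverse} implication $(4)\Rightarrow(1)$ in one line, by observing that the consistency condition $d_iv_{ij}=v_{ji}d_j$ forces any non-isosceles $3$-cycle to carry a heavy edge (weight $\ge 5$), whence Theorem~3.5 gives infinite mutation type immediately. Running the cycle in the paper's direction, $(1)\Rightarrow(2)$ is a counting argument, $(2)\Rightarrow(3)$ localizes the blow-up to a $3$-cycle, and $(3)\Rightarrow(4)$ observes that an unbounded $3$-cycle becomes non-isosceles after mutation. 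Reversing the cycle, as you do, turns the cheapest step into the most expensive one and introduces the false claim above.
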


\begin{proof}  ``$(1) \Rightarrow (2)$". Suppose that $\{w(Q'); Q' \in P(Q) \}$ is bounded for every path $P(Q)$ in $\mathbb{T}(Q)$. Then there is an a natural number  $\omega_{0}$ that is  an upper bound of the set of all  weights of quivers in $\mathbb{T}(Q)$, i.e., $w(\alpha) \leq \omega_{0}$,  for every edge $\alpha$ in any quiver in $P(Q)$.  Let $q(\omega)$ denote  the number of quivers of the same rank $n$ and with  weights less than or equal to $\omega$, i.e., $q(\omega)$ is the number of elements in the set $\{Q'; rk(Q)=n \ \text{and} \ w(Q)\leq \omega\}$. One can see that $q(\omega)$ is a finite number. Now we have  that  number of  diffrent quivers that show up in   $\mathbb{T}(Q)$  is less than or equal to the following finite sum of finite numbers
\[ \sum_{\omega\leq  \omega_{0}}q(\omega).\]
	
Therefore $[Q]$ is a finite set.

	$``(2) \Rightarrow (3)"$. Let $P(Q)$ be a rooted path  in the cluster  pattern $\mathbb{T}(Q)$ such that its set of  weights  $\{\omega(Q'), Q' \in P(Q) \}$ is unbounded. Then there is a quiver $Q' \in P(Q)$ which contains an edge $ \xymatrix{
		\cdot_{i} \ar[r]^{(b_{ij}, b_{ji})} & \cdot_{j} }$ where the weight $b_{ij}b_{ji}$ increases uncontrollably over the path $P(Q)$. The existence of  such edge  requires that it is an edge in   an unbounded $3$-cycle sub-quiver of $Q'$. Because bounded $3$-cycles are either periodic or breakable. Where periodic $3$-cycle  means applying mutations on it only its reverse the direction and breakable $3$-cycle means applying mutations removes one edge. Hence periodic $3$-cycle sub-quiver means the weights of all three edges stay with no change after applying any sequence of mutations. And breakable $3$-cycle sub-quiver means  the weight of at least one of its three edges becomes zero infinitely many times over any path.  Therefore the weights of all edges over any path are always bounded.  Which finishes the proof.

	$``(3) \Rightarrow (4)"$. Let  $Q'\in [Q]$ that contains pre-unbounded  subquiver. Then there is a sequence of mutations $\mu$ such that $\mu(Q')$ is an unbounded quiver. Therefore $\mu(Q')$ must contain a $3$-cycle subquiver where its weights increases  as we apply mutations. one can se that, applying  mutations on  different vertices of such $3$-cycle subquiver  produces another $3$-cycle that is  not isosceles.

   $``(4) \Rightarrow (1)"$. Let $Q'\in [Q]$ be a quiver that contains a $3$-cycle that is not isosceles. In such $3$-cycle, the minimum weights for two adjacent edges with distinct valuation is 2 and 3, which would correspond to valuations $(2, 1)$ and $(1, 3)$ respectively. Then the consistency condition implies that the third edge must be of valuation $(2, 3)$, which is of heavy weight.
	
\end{proof}

\begin{defn} A quiver $Q$ is called  \emph{vertex-to-vertex }$\sigma$-\emph{symmetric} if for every $i\in [1, n]$ there is a permutation $\sigma$ such that one of the following conditions is satisfied, either 
\begin{enumerate}
  \item we have $\mu_{i}(Q)=\pm \sigma (Q)$,  or
  \item there exists another vertex $j$ where  $\mu_{j}(\mu_{i}(Q))=\pm \sigma (Q)$.
\end{enumerate}
The vertex $j$ in case 2 will be called a \emph{counter vertex} of $i$ in $Q$.

 \end{defn}
 The following notation and remark will help organize and understand the second case of Definition 3.7  above.
    \begin{itemize}
      \item  For simplicity, we will use $v-v$, $\sigma$-symmetric for vertex-to-vertex symmetric and if $\tau=1$ we will omit the $\sigma$. We will use $\mu_{j, i}=\mu_{j}$ to refer to a mutation at a counter vertex $j$ of $i$ in $Q$ and $\mu_{\widetilde{i}}$ for an anonymous or undetermined  counter vertex for $i$.
      \item Inspired by Lemma 3.12  in [13], the second condition is equivalent to  the following: for every exchangeable vertex $i$ there is another vertex $k$ and a permutation $\tau$ such that  $\mu_{i}(Q)=\pm\tau(\mu_{k}(Q))$. Where for a permutation $\tau$ we have $\tau(\mu)=\mu_{\tau (i_{1})}\cdots \mu_{\tau (i_{n})}$.
    \end{itemize}
\begin{exams} The following are  $v-v$, $\sigma$-symmetric quivers

\begin{equation}
 \nonumber \  \ (a) \xymatrix{
	\cdot_{i} \ar[d]_{(4, 1)} & \ar[l]_{(1, 4)}\cdot_{j} \\
	\cdot_{k}  \ar[ur]_{(2. 2)}  }    \  \
(b) \xymatrix{ &\cdot_{i}\ar[ddl]\ar[dr]&\\
 \cdot_{j}\ar[ur]\ar[ddr]&&\cdot_{k}\ar[ll]\ar[d] \\
 \cdot_{l}\ar[u]\ar[rr]&&\cdot_{m}\ar[dl]\ar[uul]\\
 &\cdot_{n}\ar[ul]\ar[uur]& }
 (c) \xymatrix{
 \cdot_{i} \ar[d]_{2} & \ar[l]_{2}\cdot_{j} \ar[d]^{2}\\
  \cdot_{k}  \ar[ur]^{(2, 2)}& \ar[l]^{2}\cdot_{l} }
 \end{equation}

\begin{equation}\label{}
 \nonumber   (d) \xymatrix{ \cdot_{z}\ar[ddr]\\
 &\cdot_{i} \ar[d]_{2} & \ar[l]_{2}\cdot_{j}\ar[ull]\ar[d]^{2}\\
  &\cdot_{k} \ar[drr] & \ar[l]^{2}\cdot_{l} \\
   &&&\cdot_{r} \ar[uul]}
  (e) \xymatrix{\cdot_{i}\ar[d]_{2}&  \ar[l]_{(2,2)} \cdot_{l} \\
\cdot_{t}\ar[ur]^{2} \ar[d]_{2} & \ar[l]_{2}\cdot_{j} \\
	\cdot_{k}  \ar[ur]_{(2. 2)}}  \ \\
 \  (f) \ \xymatrix{\cdot_{i}\ar[d]& \cdot_{l} \\
\cdot_{t}\ar[ur]  & \ar[l] \cdot_{j}} \
\end{equation}
the weight 2 refers to the valuation $(2, 1)$ or $(1, 2)$ when it is appropriate.
For Example (a) we have $\mu_{x}(Q)=-Q, \ \forall x \in \{i, j, k\}$. For Example (b) we have $i$ is the counter vertex of $n$, $k$ is the counter vertex of $l$, and $m$ is the counter vertex of $j$. For Example (c),  We have $\mu_{x}=-Q, \ x \in \{j, k\}$ and $\mu_{j}$ is a counter vertex of  $\mu_{k}$ and vise versa. For Example (d),   $\mu_{j}$ is a counter vertex of  $\mu_{k}$, and $\mu_{z}$ is a counter vertex of  $\mu_{r}$ and vise versa respectively. For Example (e), we have  $\mu_{t}(Q)-(ji)(Q)$, $\mu_{l}(Q)=-(jk)(Q)$ and $\mu_{k}(Q)=-(li)(Q)$. For example (f), we have $\mu_{l}(Q)=-\mu_{j}(Q)$, $\mu_{j}(Q)=-(il)Q$ and $\mu_{i}(Q)=-(jl)Q$. The counter vertex of $i$ is $l$ where $\mu_{l}\mu(i)(Q)=(lt)(Q)$.

\end{exams}
The following proposition provides some properties of $v-v$, $\sigma$-symmetric quivers.
\begin{prop} If $Q$ is $v-v$, $\sigma$-symmetric, then we have

\begin{enumerate}

\item If $rk(Q)=3$, then $[Q]=\{Q, -Q\}$ or $Q$ is of $A_{3}$-type quiver.
\item If $rk(Q)=4$, then we have the following cases

\begin{enumerate}
  \item If $w(Q)=1$, then $Q$ is one of the following
  \begin{enumerate}
    \item $Q$ is of  $A_{4}$-type zigzag quiver.
    \item $Q$ is a one cycle of 4 vertices such that it is either an oriented cycle with at most 5 edges or a zigzag cycle with at most 4 edges.
    \item $Q$ is symmetric to the $\pm$ quiver $(f)$ in Example 3.8 above.
  \end{enumerate}

  \item If $w(Q)=4$, then $Q$ is either symmetric to the quiver $(c)$ in Example 3.8 above, or $Q$ is symmetric to  $\pm$ one of the following quivers

    \begin{equation}\label{}
     \nonumber         \xymatrix{
\cdot_{t} \ar[r]^{(1, 2)}&\cdot_{v} \ar[d]_{(1, 2)} & \ar[l]_{(1, 2)}\cdot_{j} \\
	&\cdot_{k}  \ar[ur]_{(4. 1)}}, \  \  \  \ \ \text{or} \ \ \  \nonumber   \  \xymatrix{
\cdot_{t} \ar[r]^{(2, 1)}&\cdot_{v} \ar[d]_{(1, 2)} & \ar[l]_{(2, 1)}\cdot_{j} \\
	&\cdot_{k}  \ar[ur]_{(2. 2)}} \  \  \  \
  \end{equation}
\end{enumerate}

  \item If $rk(Q)>4$, then $Q$ is fully cyclic such that every vertex in $Q$ is in an oriented cycle of at most 4 vertices.

  \item All $3$-cycles subquivers are either isosceles or equilateral triangles of maximum weight 4. Furthermore, if  $rk(Q)\leq 3$ then $Q$ is of finite mutation type and every $Q \in [Q']$ is also  $v-v$, $\sigma$-symmetric.

\end{enumerate}

\end{prop}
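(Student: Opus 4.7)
My plan is to exploit the defining $v-v$, $\sigma$-symmetric hypothesis to sharply constrain $\mu_i(Q)$ for every vertex $i$, then split the argument by $rk(Q)$. The key observation is that every single mutation $\mu_i(Q)$ either lies in the finite orbit $\{\tau(\pm Q) : \tau \in \mathcal{S}_n\}$ directly, or is one further mutation away from such a quiver; iterating, the entire class $[Q]$ sits inside the set of quivers obtainable from a permutation of $\pm Q$ by at most one additional mutation, which is finite. Combined with Theorem 3.5 this already gives $w[Q] \leq 4$.

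I would prove Part (4) first, since it underlies the remaining parts. Finiteness of $[Q]$ together with Lemma 3.6 immediately rules out any non-isosceles $3$-cycle in the class, so every $3$-cycle subquiver of $Q$ is isosceles or equilateral of weight at most $4$. For the propagation statement in rank $\leq 3$, I would use the (short) explicit list of three-vertex shapes that the class can contain, and verify directly that the $v-v$, $\sigma$-condition transfers from $Q$ to $\mu_i(Q)$ via the involution $\mu_i^2 = \mathrm{id}$ and the $\mathcal{S}_n$-equivariance of mutation. Part (1) then follows: on three vertices, either $Q$ is a $3$-cycle (isosceles or equilateral by Part (4), whose mutation at any vertex reverses orientation and merely swaps the repeated pair of valuations, giving $[Q]=\{Q,-Q\}$), or $Q$ is a path, in which case the $v-v$ condition at the interior vertex forces matching valuations on the two edges, yielding an $A_3$-type quiver.

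For Part (2) ($rk = 4$) I would split on $w(Q) \in \{1,4\}$. In the simply-laced case $w(Q)=1$ the underlying undirected graph is a $3$-edge tree, a $4$-cycle, or a graph on $4$ vertices containing a triangle; enumerating orientations and imposing $v-v$ at every vertex pins down exactly the $A_4$ zigzag, the oriented and zigzag one-cycles of the stated lengths, and quiver $(f)$ (up to symmetry). When $w(Q)=4$, Part (4) forces every $3$-cycle subquiver to be isosceles or equilateral of weight $4$, and then the consistency condition of Definition 2.1 combined with the $v-v$ requirement at the fourth vertex forces $Q$ to be symmetric to quiver $(c)$ or to $\pm$ one of the two displayed extensions. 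Part (3) ($rk > 4$) is established by showing that any leaf $i$, or any vertex $i$ lying on an oriented cycle of length $\geq 5$, violates $v-v$: for a leaf, $\mu_i(Q)$ only reverses the incident edge while $\mu_j(\mu_i(Q))$ only reshuffles $j$'s star, and neither can equal $\pm\sigma(Q)$ once $n \geq 5$; for a long cycle, mutation at a vertex introduces a chord that no single counter-mutation can absorb while relabeling the rest of the quiver. The main obstacle is the rank-$4$ enumeration, particularly the $w(Q)=4$ subcase, where care is required to systematically rule out the many candidate valuations on edges extending a weight-$4$ triangle, using the consistency condition together with a vertex-by-vertex $v-v$ check.
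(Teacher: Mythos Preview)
Your central ``iteration'' step does not close. The $v$-$v$ hypothesis tells you that each $\mu_i(Q)$ lies at mutation-distance at most one from some $\pm\sigma(Q)$, but it says nothing about $\mu_k(\mu_i(Q))$ when $\mu_i(Q)$ is genuinely one step away rather than equal to $\pm\sigma(Q)$. Concretely: if $\mu_i(Q)=\mu_j(\pm\sigma(Q))$ with $j$ the counter vertex, then $\mu_k\mu_i(Q)=\mu_k\mu_j(\pm\sigma(Q))$; invoking $v$-$v$ again at $\sigma^{-1}(j)$ only rewrites the inner factor as $\mu_m(\pm\rho(Q))$ for some new pair $(m,\rho)$, leaving you with the two-mutation expression $\mu_k\mu_{\sigma(m)}(\pm\sigma\rho(Q))$ and no reduction unless $k=\sigma(m)$. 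So the containment $[Q]\subseteq\{\text{quivers at most one mutation from some }\pm\sigma(Q)\}$ is unproven, and with it your appeal to Theorem~3.5 for $w[Q]\le 4$. Note also that the proposition asserts finite mutation type only for $rk(Q)\le 3$; the paper immediately afterwards (Questions~3.10) poses the general-rank finiteness question as open, so your argument would be establishing strictly more than what is claimed.

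This gap undermines your route to Part~(4). The paper does not pass through finiteness of $[Q]$ at all: it argues directly that a $3$-cycle which is not isosceles (or has weight exceeding $4$) must, via the consistency condition, carry a heavy-weight edge and hence be unbounded in the sense of Definition~3.4, and then observes that a vertex of such an unbounded triangle cannot satisfy the $v$-$v$ condition, since mutations within the triangle only increase its maximal weight and thus no $\pm\sigma(Q)$ is reachable in one or two steps. Once Part~(4) is obtained by that direct route, your enumerative treatment of Parts~(1)--(3) is broadly in line with the paper's own terse case analysis; the one notable difference is that for Part~(3) the paper locates the failure of $v$-$v$ at the vertex $j$ adjacent to a leaf (showing $j$ admits no counter vertex), rather than at the leaf itself.
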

\begin{proof}
  \begin{enumerate}
  \item The case of $rk(Q)=3$ is obvious.

  \item Let $rk(Q)=4$. A good exercise is to show that all quivers described in Part 2 (a) and (b) are $v-v$, $\sigma$-symmetric.  Now, let $w(Q)=1$,  one can see that if $Q$ is acyclic but neither zigzag quiver nor  one oriented cycle then it is not $v-v$, $\sigma$-symmetric. If $w(Q)=1$ or $4$, and it  is not one cycle quiver but contains cyclic subquiver, then it must be a $3$-cycle attached to one leaf. If $Q$ is not symmetric to any of the quivers given in Part (b) above or quiver (c) in Example 3.8, then in such case there are very limited number of cases which would be easy to see that,  $Q$ will not be $v-v$, $\sigma$-symmetric.
    \item
     Let $rk(Q)\geq 5$. We have every acyclic quiver of rank $\geq 5$ is not $v-v$, $\sigma$-symmetric. Then  if the underlying graph of $Q$ contains  acyclic subquiver of at least five vertices then $Q$ can not be a $v-v$, $\sigma$-symmetric. The same argument can be made for any one-cycle quiver of 5 vertices or more.  Now, suppose that $Q$ is $v-v$, $\sigma$-symmetric which contains a leaf, say at a vertex $i$. Then since  $rk(Q)>2$ hence the vertex $i$ is connected to $Q$ through exactly one vertex say $j$ which  must be a part of a cycle. We will show that $\mu_{\tilde{j}}$ does not exist. Applying $\mu_{j}$ will create (at least one)  $3$-cycle with $i$ as a vertex in it and say $k$ as a third vertex. Hence the $Nhb._{\mu_{j}(Q)}(i)$ will contains $j$ and some other neighbor vertices to $j$. Any mutation in the direction of any vertex  in $Nhb._{\mu_{j}(Q)}(i)$ will either remove $[j,i]$ or reverse it. If $[j,i]$ was  reversed  then the new edge $[k,i]$ stays between this vertex and $i$ which means this mutation can not be $\mu_{\tilde{j}}$. The mutation on the directions of the other neighbor vertices of $j$ that are not in $Nhb._{\mu_{j}(Q)}(i)$ will not affect $[j, i]$ or $[k, i]$ which means there is  no counter vertices  of $j$ can be exist.
    \item
Suppose that there is one $3$-cycle that is not  isosceles,  equilateral or  of maximum weight 4. Then there is a $3$-cycle with three edges with three different weights. Hence it is of a maximum weight bigger than 4, i.e.,  this $3$-cycle will be unbounded, thanks to Lemma 3.6. Which means there is at least one  vertex with no counter vertex. Also, one can see that un-oriented $3$-cycles are in fact not $v-v$, $\sigma$-symmetric which means $Q$ does not have any unbounded subquivers. For the last part,  if $Q$ is $v-v$, $\sigma$-symmetric quiver  of  rank 3  then it would have a small mutation class, which is also easy to check.

      \end{enumerate}

\end{proof}

\begin{ques} Let $Q$ be $v-v$, $\sigma$-symmetric quiver.

 \begin{enumerate}
 \item What sufficient conditions on $rk(Q)$ and/or $w(Q)$ such that  $Q$ is of finite mutation type.
\item  What sufficient conditions on  $Q$ such that every quiver  $Q'$ in $[Q]$ is also $v-v$, $\sigma$-symmetric?

        \end{enumerate}
\end{ques}

We speculate that the answer for the second question to be a very small class of quivers. Inspired by Theorem 3.5, Lemmas 3.6 and Proposition 3.9, in the following  we will develop a method to classify the mutation classes based on their weights using what we call \emph{leading quivers}.

\begin{lem} Let $Q$ be a  quiver of  finite mutation type. Then the weight of $[Q]$ is determined as follows
\begin{enumerate}
  \item  $w[Q]=2$  if and only if $[Q]$ contains a quiver  that has exactly one  edge of weight $2$,   such that the sub-quiver    $Q_{\{i, j\}}$ is of $A$-type or $rk(Q)=2$. In other words, $[Q]$ contains a quiver of the type $B_{n},C_{n}$ or $F_{4}$.
  \item $w[Q]=3$ if and only if $[Q]$  contains the   quiver $ \xymatrix{\cdot_{i} \ar[r]^{(3, 1)} & \cdot_{j}}.$
  \item  $w[Q]=4$  if and only if one of the following cases, depending  on the rank of $Q$, is satisfied
   \begin{enumerate}
     \item [(i)] If $rk(Q)=3$  then  $[Q]$ contains  one of the quivers
      \begin{equation}
  \nonumber  \xymatrix{
\cdot_{t} \ar[d]_{(x, 1)} & \ar[l]_{(1, x)}\cdot_{j} \\
	\cdot_{k}  \ar[ur]_{(2. 2)}}, \ \ \ \ \ \ \ \
 \xymatrix{
\cdot_{t} \ar[d]_{(2, 2)} & \ar[l]_{(2, 2)}\cdot_{j} \\
	\cdot_{k}  \ar[ur]_{(2. 2)}}  \ \ \text{or} \ \ \ \
\  \xymatrix{
\cdot_{t} \ar[d]_{(2, 1)} & \ar[l]_{(2, 1)}\cdot_{j} \\
	\cdot_{k}  \ar[ur]_{(1. 4)}}
 \end{equation}
 where $x=1, 2, 3$ or $4$.
     \item [(ii)] If $rk(Q)>3$ then $[Q]$ must satisfy the following criteria: Every quiver $Q'\in [Q]$  of weight 4  satisfies the following

       \begin{enumerate}
         \item [A.] Edges of weight  4 in $Q'$  appear in  a cyclic subquiver which is symmetric to  one of the following quivers

 \begin{equation}
\nonumber  (a)  \  \   \ Q_{a, x}: \  \xymatrix{
\cdot_{v} \ar[d]_{(x, 1)} & \ar[l]_{(1, x)}\cdot_{j} \\
	\cdot_{k}  \ar[ur]_{(2. 2)}} \  \  \  \ \ \ \  \
 (b)  \  \  \ Q_{a}: \  \xymatrix{
\cdot_{v} \ar[d]_{(1, 2)} & \ar[l]_{(1, 2)}\cdot_{j} \\
	\cdot_{k}  \ar[ur]_{(4. 1)}} \  \  \  \
 \end{equation}
 \begin{equation}\label{}
 (c) \  Q_{c, t}: \  \xymatrix{\cdot_{v}\ar[dr]^{(t, 1)}\\
 \cdot_{k}  \ar[u]^{(1, t)}\ar[dr]_{(1, 2)}& \cdot_{j}\ar[l]_{(2, 2)} \\
 & \ar[u]_{(2, 1)}\cdot_{l} }\\ \  \  \ \  \  \
 (d)  \  Q_{d}:  \  \  \  \ \  \ \xymatrix{\cdot_{v}\ar[dr]\\
 \cdot_{i}  \ar[u]\ar[dr]_{(1, 3)}& \cdot_{j}\ar[l]_{(2, 2)} \\
& \ar[u]_{(3, 1)}\cdot_{l} } \\
 \end{equation}

where $x=1, 2, 3$ or $4$  and  $t=1$ or $ 2$, such that edges of weight 4 are not connected outside their cycles. And any subquiver of the form $ \xymatrix{ \cdot \ar@{-}[r]^{z} &\cdot  \ar@{-}[r]^{z} &\cdot}, z=2$ or  $3$  appears in a quiver that is mutationally equivalent to one of the forms in (3.3).
 \item [B.]  $Q'$ will have more than one edge of weight 4  if it  is symmetric to  one of the following cases

      \begin{itemize}
        \item $Q'$ is formed from two or three copies of $Q_{a, 1}$  by coherently connecting them at $v$ such as in $X_{6}$ and $X_{7}$.  Or $Q$ is formed from $Q_{a, 2}$ and/or $Q_{a}$  in the following form

             \begin{equation}\label{}
              \  \xymatrix{
\cdot_{j'} \ar[r]^{2} &\cdot_{v}\ar[dl]^{2} \ar[dr]_{2} & \ar[l]_{2}\cdot_{j} \\
\cdot_{k'}\ar[u]^{4}&&\cdot_{k}  \ar[u]_{4}}
            \end{equation}

        \item  $Q'$ is formed from $Q_{a, 1}$, $Q_{a, 2}$ and/or $Q_{a}$  in one of the following forms

       \begin{enumerate}
\item [(a)]
            \begin{equation}\label{}
             \  \xymatrix{
\cdot_{j'}\ar[r]^{2}&\cdot_{v'} \ar[d]^{2}\ar@{-}[r] &\cdots\ar@{-}[r]&\cdot_{v} \ar[d]_{2} & \ar[l]_{2}\cdot_{j} \\
&\cdot_{k'}\ar[ul]^{4}&&\cdot_{k}  \ar[ur]_{4}},
            \end{equation}
           where the subquiver connecting $v$ and $v'$ is of $A$-type. Or $Q$ is the quiver
 \item [(b)]

            \begin{equation}\label{}
             \  \xymatrix{
\cdot_{j'}\ar[r]&\cdot_{v'} \ar[d]\cdot\ar@{-}[r]&\cdot_{v} \ar[d] & \ar[l]\cdot_{j} \\
&\cdot_{k'}\ar[ul]^{4}&\cdot_{k}  \ar[ur]_{4}}
            \end{equation}
        \end{enumerate}
      \end{itemize}
 Any additional subquiver of $Q'$ attached to a vertex in the quivers $Q_{a, x}, Q_{a},  Q_{b}$ or $Q_{c, t}$ will be referred to as a `` tail" and we will refer to $Q_{a, x}, Q_{a}, Q_{b}$ or $Q_{c, t}$ as a ``head".

     \item [C.] Tails of a weight-4 quiver $Q'\in [Q]$ satisfy the following
     \begin{itemize}
       \item If $Q'$ is one of the quivers $Q_{a, x}, x=1, 2, 3$, $Q_{a}$ or $Q_{c, 1}$ in (3.3)  then $Q'$ could have one simply-laced tail attached at the vertex $v$.

       \item  If $Q'$ is one of the quivers in (3.4), (3.5) or (3.6)  then it does not have any tails.
     \end{itemize}

       \end{enumerate}

   \end{enumerate}

\end{enumerate}

\end{lem}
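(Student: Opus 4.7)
The plan is to combine Theorem 3.5 with Lemma 3.6(4) --- every 3-cycle in a mutationally finite quiver must be isosceles or equilateral --- and to apply the consistency condition $d_i v_{ij} = v_{ji} d_j$ together with the mutation rules from Remarks 2.3 to constrain valuations in each weight regime. In each of the three cases, the ``if'' direction is handled by direct verification that the listed quivers mutate within classes of the prescribed weight, while the ``only if'' direction requires ruling out all other configurations by showing they would produce a heavier edge after one or two mutations.

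For part (1), a weight-2 edge has valuation $(2,1)$ or $(1,2)$. If the exchangeable subquiver $Q_{\{i,j\}}$ outside the edge is not of $A$-type, then repeated mutations eventually link the weight-2 edge into a 3-cycle; the combination rule in Remarks 2.3(1)(b) then produces a valuation whose weight exceeds 2, contradicting $w[Q]=2$. The converse follows from the invariance of weights under $A$-type mutations. For part (2), a weight-3 edge must have valuation $(3,1)$ or $(1,3)$. Any 3-cycle containing it is isosceles by Lemma 3.6(4), but a direct inspection using Remarks 2.3(1)(c) shows that no isosceles triangle with one edge of weight 3 stays bounded at weight 3 after mutation --- weight 4 or higher necessarily appears. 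Hence the weight-3 edge must be isolated from cycles, which forces the stated rank-2 form.

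Part (3) is the main substance and is broken down by rank. For $rk(Q)=3$, I enumerate all valuated triangles of maximum weight 4 that satisfy the consistency condition, yielding exactly the three listed templates; the remaining checks confirm each is mutationally finite of weight 4. For $rk(Q)>3$, I work locally around a given weight-4 edge, whose valuation lies in $\{(4,1),(1,4),(2,2)\}$. By Lemma 3.6(4), every 3-cycle it participates in is isosceles, so a neighborhood analysis --- mutating at each vertex of a hypothetical enlargement and tracking the resulting valuations via Remarks 2.3 --- forces the weight-4 edge to sit inside one of the four prototype heads $Q_{a,x}, Q_a, Q_{c,t}, Q_d$. The multi-head configurations (3.4)--(3.6) are obtained by asking which coherent joinings of prototype heads at a shared vertex $v$ remain bounded at weight 4 under mutation; completeness of the list is established by the same mutate-and-check procedure applied to every other candidate joining, each of which produces a 3-cycle of weight $>4$ in at most two mutations. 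The tail restrictions in (C) follow analogously: any non-simply-laced tail, or a simply-laced tail attached at a non-$v$ vertex, creates under one or two mutations a 3-cycle violating the isosceles condition of Lemma 3.6(4).

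The main obstacle is the exhaustive case analysis for $rk(Q)>3$, especially the verification of completeness for multi-head configurations (3.4)--(3.6) and tail constraints (C). Each prototype head mutates differently at each of its vertices, and a coherent joining at $v$ introduces interactions that must be tracked across several mutation steps; propagation of valuations through the shared vertex is the delicate combinatorial point. The key technical observation that makes this tractable is that, under the isosceles constraint of Lemma 3.6(4), the mutation rules of Remarks 2.3 strongly restrict how a heavy edge can migrate, so that only finitely many ``interfaces'' between heads are consistent with $w[Q]=4$, and these interfaces are exactly the ones appearing in (3.4)--(3.6).
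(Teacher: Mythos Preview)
Your proposal follows essentially the same strategy as the paper: invoke Lemma~3.6(4) to force all 3-cycles to be isosceles, use the consistency condition to enumerate the admissible local configurations around a weight-4 edge, and rule out the remaining candidates by exhibiting mutation sequences that create a heavy edge. The paper likewise declares Parts~1, 2 and 3(i) ``obvious'' and concentrates on Part~3(ii), doing the head/tail case analysis in exactly the way you outline.

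Two small points where your outline undershoots the paper's actual work. First, for the ``if'' direction in Part~3(ii) the paper does not simply ``directly verify'' that the listed quivers stay at weight~4; it runs an induction on the length of the mutation sequence, checking at the initial step that a single mutation at any vertex (tail vertex, head vertex facing the weight-4 edge, etc.) either preserves the weight-4 criteria or drops the weight below~4. You should expect to need this inductive structure rather than a finite check. Second, your claim that forbidden tails produce a non-isosceles 3-cycle ``in at most two mutations'' is too optimistic: the paper's explicit examples (e.g.\ a $Q_{a,1}$ head with a length-two weight-2 tail, or the configuration in~(3.8)) require sequences of length three to six, such as $\mu_{l}\mu_{k}\mu_{j}\mu_{i}\mu_{l}\mu_{k}$, before a heavy edge appears. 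The mechanism is the one you describe, but the bookkeeping is longer than you anticipate.
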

 For simplicity, we will call the conditions given in Part 3 of Lemma 3.11, the ``\emph{weight-4 criteria}".
\begin{proof} \begin{itemize}
                \item  Parts 1,  2 and Part 3 case \emph{(i)} are obvious.
                \item Proof of Part 3 Case \emph{(ii)}. $``\Rightarrow"$.
\begin{itemize}
 \item [weight 4 edges.] If a weight $4$ edge is connected to another edge that is  not part of a triangle, i.e.,  $Q$ contains a subquiver of the form
 $\xymatrix{\cdot \ar[r]^{4} & \cdot\ar[r]&\cdot }$. Then a heavy weight edge can be easily created by applying  a  sequence of mutations. Which means all weight 4 edges must be completely within oriented cycles. In particular,  weight 4 edges are not  directly connected to tails (if any). Thanks to Lemma 3.6,  all triangles are isosceles then all possible   options  are the ones in (3.3).

  \item [More about tails.] Let $Q$ be of the type $Q_{a, 1}$  with a length 2 tail of weight $2$, i.e., a tail  formed of two edges, at least one of which is of weight 2. Then one can assume that $[Q]$ contains a quiver which has a subquiver that is symmetric to the quiver

\begin{equation}
 \  \  \xymatrix{
 &\cdot_{i} \ar[dr]& \cdot_{j}\ar[l]_{(2, 2)} \\
 \cdot_{v}& \ar[l]\cdot_{k}& \ar[l]^{(2, 1)}\ar[u]\cdot_{l} }.
\end{equation}
Applying the  sequence $\mu_{l}\mu_{k}\mu_{j}\mu_{i}\mu_{l} \mu_{k}$ will create a heavy weight edge. A similar sequence of mutations can be formed if the weight 2 edge was $[kv]$. Then a quiver  of finite  mutation type of  $Q_{a, 1}$ type can not have a tail of weight two and length two. A similar argument can be made if $Q$ is of the other two types $Q_{b}$ and $Q_{c, 1}$. Now, if $Q$ has a tail of weight $2$ connected to non-simply-laced edges. Then $[Q]$ contains a quiver that has a sub-quiver that is symmetric to the quiver

  \begin{equation}
 \  \  \xymatrix{\cdot_{j}\ar[dr]\\
 \cdot_{i}  \ar[u]\ar[dr]_{(1, 2)}& \cdot_{j}\ar[l]_{(2, 2)} \\
  \cdot_{k}& \ar[l]^{(2, 1)}\ar[u]_{(2, 1)}\cdot_{l} }.
\end{equation}
Applying $\mu_{i}\mu_{j}\mu_{l}$ will create a heavy weight edge. Then, if the head is a rectangular cycle and has a  tail that is connected to an edge of  weight two or more, then for $Q$ to be of finite mutation type the tail must be simply-laced sub-quiver.

One can see that if $Q$ has a tail of weight bigger than 2 then a sequence of mutation can be also  applied to create a heavy weight edge. Then tails are subquivers of weight at most $2$, for $Q$ to be of finite mutation type. And in the case of a weight 2 tail exists then it must be a one single edge that is connected  only to simply-laced edges.

Finally, one can show that if  any of the quivers in (3.4), (3.5) or (3.6) has a tail attached at $v$ or $v'$, then a heavy weight edge can be formed. Similarly, if any of the heads in (3.3)  has two tails where both are connected only at $v$, i.e., the two tails are  not part of a $3$-cycle, then one can come up with a sequence of mutations to produce a heavy weight edge.

\end{itemize}
                 $``\Leftarrow" $. In the following we will show that for a quiver $Q$, if $[Q]$  satisfies the weight 4 criteria then $w[Q]=4$. First, we will use mathematical induction on the length of mutations sequences to show that every quiver produced  from $Q$  satisfies the same criteria or has a reduced weight.  Then we will explain why  a heavy weight edge can not be created by applying any sequence of mutations on $Q$.
\begin{enumerate}
  \item [Initial step]

               Let $Q'=\mu_{j}(Q)$. First assume that $j$ is a vertex in a tail. Let $i \in Nhb.(j)$. If $i$ is connected to $j$ by a simply-laced edge such that $i$ is not a vertex in any cycle, then $i$ will stay connected to a simply-laced edge in $Q'$ and if a $3$-cycleto be formed due to applying $\mu_{j}$, it will be  isosceles or equilateral triangle.  If $i$ and $j$ are connected by a non-simply-laced edge, then $j$ will be a leaf or connected from the other side by a simply laced edge. In either cases $i$ will still satisfy the criteria in $Q'$ or the weight of the edge of weight  4  will be reduced.

               Now suppose that $i$ is a vertex in a $3$-cycle but $j$ is not, then same as the previous case, notice that no change will occur in the existing cycles but a new $3$-cycle could be formed which would be an isosceles triangle.

               Secondly, suppose that $j$ is a vertex in a head. If $j$ is facing the weight 4 edge, then the $3$-cycle direction will be reversed or the weight 4 edge will be removed or educed in $Q'$. The later possibility is due to the weights inequity, as the possible triangles are of weights $[(1, 2), (2, 1), (2, 2)], [(1, 2), (1, 2), (4, 1)]$ or $[(1, 3), (3, 1), (2, 2)]$. Notice that, based on the weight 4 criteria, we can't   have any weight $4$ edge that is not part a head.

               In the case of the weight 4 edge is removed then the quiver $Q'$ will have a weight less than 4, which moves it to one of the less weight classes.  If a $3$-cycle to be formed by applying $\mu_{j}$ then it will be again in an isosceles $3$-cycle of weight 2, since $j$ will be connected by a simply-laced edge to outside the $3$-cycle of weight 4.
               Therefore, $Q'$ still satisfy the weight 4 cetraria.
     \item [Induction step] Now assume that $\mu(Q)$ satisfies the weight 4 criteria for any sequence of mutations of length $k$. Hence from the previous step, one can see that a new weight 4 edge would not be formed unless if it is replacing a removed edge and isosceles  or  simply-laced equilateral triangles in $Q$ are not getting any bigger weight in $\mu_{j}(\mu (Q))$ for eavery $j\in [1, n]$. So, we have $\mu_{j}(\mu(Q))$  satisfies the weight 4 criteria. Then $w(\mu_{j}(\mu(Q)))\leq 4$, which finishes the proof.
    \end{enumerate}
\end{itemize}
\end{proof}

\begin{cor and defn}\begin{itemize}
\item Every mutation class of weight 2 can be produced from a quiver that has exactly one  edge $[ij]$ of weight 2, $\xymatrix{\cdot_{i}\ar[r]^{2}&\cdot_{j} } $, such that the subquiver    $Q_{\{i, j\}}$ is of $A$-type or trivial.

                      \item  The quivers appeared in Lemma 3.11 will be called ``leading quivers".
                    \end{itemize}

\end{cor and defn}

 \textbf{Notations} \begin{enumerate}
\item Let $\mu=\mu_{i_{1}}\cdots \mu_{i_{n}}$. We  write $\overleftarrow{\mu}:=\mu_{i_{n}}\cdots \mu_{i_{1}}$  and  $\{\mu\}=\{\mu_{i_{1}},\ldots, \mu_{i_{n}}\}$. 
\item Let $i, j\in [1, n]$ be two adjacent vertices, we denote the \emph{pentagon  sequences} of mutations $\mu_{i}\mu_{j}\mu_{i}\mu_{j}\mu_{i}$ by $\mu_{[i,j]}$ and when we write   $\mu_{\bar{i}}$ we mean a  sequence of mutations such that $\{\mu_{\bar{i}}\}$ does not contain $\mu_{i}$ except within a subsequence of mutations of the form $\mu_{[i,j]}$ or $\mu_{[j,i]}$. An example of  $\mu_{\overline{i}}$ is $\mu_{j_{1}}\mu_{j_{2}}\mu_{j_{3}}\mu_{j}\mu_{i}\mu_{j}\mu_{i}\mu_{j}\mu_{j_{4}}$, where $j\in Nhb.(i)$.

\end{enumerate}

The following proposition is inspired by  [1, Theorem 2.6] and [12, Proposition 3.12].
\begin{prop}  Let $Q$ be a  quiver such that $rk(Q)> 2$. Then  the following are equivalent
\begin{enumerate}

    \item [(1)] $\mathcal{S}_{n} \subset \mathcal{M}(Q)$;
    \item [(2)]  $\mathcal{S}_{n} \subset \mathcal{M}(Q')$, for every $Q' \in [Q]$;
    \item [(3)]  $Q$ is either

    \begin{itemize}
      \item   simply-laced quiver of finite mutation type, or
      \item  $Q$ is a $v-v$, $\sigma$-symmetric quiver of rank three.
    \end{itemize}

  \end{enumerate}
\end{prop}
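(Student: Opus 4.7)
The plan is to treat the three implications separately, making the cycle $(2)\Rightarrow (1)\Rightarrow (3)\Rightarrow (2)$, but replacing the last implication by the pair $(3)\Rightarrow(1)$ together with the mutation-invariance $(1)\Rightarrow(2)$. The implication $(2)\Rightarrow(1)$ is immediate because $Q\in[Q]$. For $(1)\Rightarrow(2)$ I would argue as follows: if $Q'\in[Q]$, then $Q'=\nu(Q)$ for some $\nu\in\M(Q)$; given any $\sigma\in\s_{n}$ realized on $Q$ by a mutation sequence $\mu\in\M(Q)$, the conjugated word $\nu\circ\mu\circ\overleftarrow{\nu}$, with indices relabeled by the permutation recorded along $\nu$, realizes $\sigma$ on $Q'$. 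The check is essentially bookkeeping, using that mutations and permutations of vertex labels interact by conjugation (this is the same mechanism used in the definition of $v$-$v$, $\sigma$-symmetric).

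For $(3)\Rightarrow(1)$ the two sub-cases need different arguments. When $Q$ is simply-laced of finite mutation type, the inclusion $\s_{n}\subset\M(Q)$ is the cited content of [1, Theorem 2.6] and [12, Proposition 3.12]; nothing new is required. When $Q$ is $v$-$v$, $\sigma$-symmetric of rank three, Definition 3.7 gives, for each exchangeable vertex $i$, either a permutation $\sigma_i$ with $\mu_i(Q)=\pm\sigma_i(Q)$ or a counter vertex $j$ with $\mu_j\mu_i(Q)=\pm\sigma_i(Q)$. In rank three the classification in Part 1 of Proposition 3.9 forces $[Q]=\{Q,-Q\}$ or $Q$ of $A_3$-type; in either possibility one can check that the three transpositions $(12),(13),(23)$ all arise from explicit short mutation words, which generate $\s_3$.

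The main work, and the hard part, is $(1)\Rightarrow(3)$. I would argue contrapositively: assume $Q$ is neither simply-laced of finite mutation type nor $v$-$v$, $\sigma$-symmetric of rank three, and exhibit a permutation $\tau\in\s_n$ that is not realized by any mutation sequence. Split into two cases. First, if $Q$ is of infinite mutation type, then by Lemma 3.6 some $Q''\in[Q]$ contains a non-isosceles $3$-cycle, hence a heavy edge can be produced by continued mutation. Since mutations do not change the multiset of vertex labels, a permutation $\tau$ corresponds to an automorphism of the underlying (unlabelled) valued quiver; by tracking a heavy edge I would produce a label $i$ whose incident weight pattern in $Q$ cannot be matched by any other vertex, so the transposition $(ij)$ can never be realized for any $j$, i.e.\ $(ij)$ is a blocking transposition in the sense of Definition 3.4. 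Second, if $Q$ is of finite mutation type but not simply-laced and $rk(Q)\ge 4$, I would invoke the weight-$4$ criteria of Lemma 3.11 to locate a non-simply-laced edge forced to sit inside a specific head $Q_{a,x},Q_a,Q_b$ or $Q_{c,t}$, and inspect each of the configurations (3.3)--(3.6) together with their permitted tails; in every such configuration at least one vertex has a unique incidence profile (typically the vertex adjacent to the weight-$4$ edge), so again some transposition is blocking. For $rk(Q)=3$ and $Q$ not $v$-$v$, $\sigma$-symmetric, the classification in Proposition 3.9(1) shows the only remaining possibilities have a distinguished vertex whose orbit under mutation still does not coincide with the orbit of another vertex, giving a blocking transposition.

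The expected obstacle is the last case-analysis in $(1)\Rightarrow(3)$: ruling out, \emph{uniformly across rank $\ge 4$ and finite mutation type}, the non-simply-laced quivers. One could try to avoid a long quiver-by-quiver argument by extracting an invariant from Lemma 3.11, for instance the isomorphism class of the sub-multigraph of edges of weight $\ge 2$, and showing that this invariant is preserved under all mutations that could conceivably realize a transposition; then whenever this sub-multigraph is not vertex-transitive, the corresponding transposition is blocking. I expect this to be the cleanest packaging, but verifying the preservation of the invariant against the full list (3.3)--(3.6) is where the genuine work lies.
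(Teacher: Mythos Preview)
Your overall architecture --- the cycle of implications, the conjugation argument for $(1)\Rightarrow(2)$, the citations and rank-three check for $(3)\Rightarrow(1)$, and the contrapositive split into finite versus infinite mutation type for $(1)\Rightarrow(3)$ --- matches the paper's proof. The divergence is in the finite-type branch of $(1)\Rightarrow(3)$. The paper does \emph{not} go through Lemma~3.11 and the configurations (3.3)--(3.6) at all. Having already established $(1)\Leftrightarrow(2)$, it is free to replace $Q$ by any convenient representative in $[Q]$; it chooses one containing a \emph{leaf} $j$ attached by a non-simply-laced edge (weight $2$, $3$ or $4$) to a vertex $i$, with $i$ joined to the rest of $Q$ by a simply-laced edge. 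It then argues by induction on the length of a mutation word that $j$ remains either a leaf or a vertex of an oriented $3$-cycle throughout the mutation class. Since in $(ij)(Q)$ the vertex $j$ is neither, this forces $(ij)\notin\mathcal{M}(Q)$. This single local tracking argument replaces your proposed head-by-head case analysis and your search for a mutation-stable weight invariant; it is both shorter and uniform across $w[Q]\in\{2,3,4\}$.

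There is also a genuine gap in your blocking criterion. You assert that a transposition $(ij)$ is blocked whenever $i$ has a ``unique incidence profile'' in $Q$, and elsewhere that ``a permutation $\tau$ corresponds to an automorphism of the underlying (unlabelled) valued quiver''. But $(ij)\in\mathcal{M}(Q)$ means only that $(ij)(Q)\in[Q]$, not that $(ij)$ is an automorphism of $Q$ itself; so uniqueness of an incidence profile in one quiver of the class says nothing. What you need is a property of the \emph{labelled} vertex $j$ that persists across the entire mutation class, and your candidate invariant (the sub-multigraph of edges of weight $\ge 2$) is not stable at labelled vertices under mutation --- weights at a given label can go up or down. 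The paper's leaf-tracking argument supplies exactly such a labelled, mutation-stable property (``$j$ is a leaf or lies in an oriented $3$-cycle''), and that is what makes the finite-type case go through without the classification machinery.
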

\begin{proof}
\begin{enumerate}
\item  $(1)\Rightarrow (2)$. Let $\sigma (Q)=\mu_{\sigma, Q}(Q)$ and $Q' \in [Q]$ such that $Q'=\mu_{0}(Q)$ for some sequence of mutations $\mu_{0}$. Therefore, one can see that
\begin{equation}
\nonumber \sigma (\mu_{0}) \mu_{\sigma, Q} \overleftarrow{\mu}_{0}(Q')=\sigma (Q').
\end{equation}
\item $(2)\Rightarrow (1)$. Obvious.

\item $(1)\Leftrightarrow (3)$. $``\Leftarrow" $ If $Q$ is simply-laced, then  [1, Theorem 2.6] and [12, Proposition 3.12] provide the proof of this case. The case of  $Q$ is a $v-v$ symmetric quiver of rank 3 is an easy exercise, thanks to Proposition 3.9.

  $``\Rightarrow" $. We only need to prove the statement is true for one quiver in $[Q]$ thanks to Part (1) of the proposition. Let $Q$ be a quiver that is not simply-laced nor $v-v$, $\sigma$-symmetric of rank 3. We will divide the proof into two cases.
\begin{enumerate}
\item First case, if $Q$ is of finite mutation type quiver. Without loss of generality, assume that $Q$ contains a leaf,  $\xymatrix{\cdot_{i} \ar[r]^{(b_{ij}, b_{ji})} & \cdot_{j}}$, with weight $\vert b_{ij}b_{ji}\vert \in \{2, 3, 4 \}$, connected from the vertex $i$ with a simply laced edge to the rest of the quiver $Q$. We will show, by mathematical induction, that any sequence of mutations $\mu$ will keep the vertex $j$ as a leaf or turn it to be a vertex in a cyclic triangle. Which would mean the permutation $(ij)\notin M(Q)$. The mathematical induction is on the number of single mutations in $\{\mu \}$. An easy exercise is to show that the statement is true if $\vert\{\mu\}\vert=1,2$ or $3$. Assume that every sequence of mutations  $\mu$ such that $\vert\{\mu\}\vert =k$ will send $j$ to either a leaf or a vertex in a cyclic triangle. Again an easy exercise is show that for any $k \in [1, n]$ we have $\mu_{k}\mu$ will send $i$ to a cyclic $3$-cycleor a leaf.
\item Second case, let $Q$ be of an infinite mutation type.   Lemma 3.6 guarantees the existence of a path $P(Q)$  in $\mathbb{T}_{n}(Q)$  that contains  a quiver $Q'$ with a cyclic  triangular sub-quiver $[i, j, t]$ with an edge $\xymatrix{\cdot_{i}  \ar[r] ^{(b_{ij}, b_{ji})} &\cdot_{j} }$ such that applying any sequence of mutations containing $\mu_{t}, \mu_{i}$ or $\mu_{j}$ on $Q'$ will result in increasing  the weight of one of edges of the $3$-cycle$[i,j,k]$. Thus the production of a $3$-cycle$[\sigma (i), \sigma(j), \sigma(k)]$ with same weights as of the $3$-cycle$[i,j,k]$ is impossible for any permutation $\sigma$ which means that the permutation $(ij)$ can not be an element of $\mathcal{M}(Q')$ hence $(ij)\notin \mathcal{M}(Q)$. Then  the symmetric group $\mathcal{S}_{n}$ is not a subset of $\mathcal{M}(Q)$.
\end{enumerate}

\end{enumerate}

\end{proof}

\begin{defn}

  Let $i$ be a vertex in $Q$ such that all edges connected to it are simply-laced. Then we say that the vertex $i$ has  \emph{a simply-laced avenue} if it  satisfies the following

  \begin{enumerate}
    \item $i$ is not directly connected to any pre-unbounded subquiver;
    \item $i$ is connected to another vertex that is at least two edges away from the nearest non simply-laced edge, if any.
  \end{enumerate}

Example 3.3 Part 2 quiver (3.1) vertex 2 is an example of a vertex with a simply-laced avenue.

\end{defn}

\begin{prop}  Let $(X, Q)$ be a seed  and  $i$ be a vertex  with simply-laced avenue. Then  there is a sequence of mutations $\mu_{\bar{i}}$ and a permutation $\tau$ such that $\mu_{\bar{i}}\mu_{i}(Q)=\pm\tau(Q)$. In particular we have
\begin{enumerate}
  \item the statement is true for simply-laced  quivers of finite mutation type and $v-v$ symmetric quivers.
  \item  let $x_{i}$ be the initial cluster variable at the vertex  $i$ then $\mu_{i}(x_{i})$ is a cluster variable in  the seed $(\mu_{\bar{i}}\mu_{i}(X), \pm\tau(Q))$.
\end{enumerate}

\end{prop}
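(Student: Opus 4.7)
The plan is to dispatch the two ``in particular'' cases first via earlier results, and then to reduce the general statement to a local pentagon identity supplied by the simply-laced avenue.

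For a $v$-$v$, $\sigma$-symmetric quiver, Definition 3.7 gives either $\mu_i(Q)=\pm\sigma(Q)$, in which case I take $\mu_{\bar i}$ to be the empty mutation sequence and $\tau=\sigma$, or else there is a counter vertex $j\neq i$ with $\mu_j\mu_i(Q)=\pm\sigma(Q)$, in which case $\mu_{\bar i}=\mu_j$ works (this is a single mutation at a vertex other than $i$, so trivially a valid $\mu_{\bar i}$-sequence). For a mutationally finite simply-laced quiver, Proposition 3.13 yields $\mathcal{S}_n\subset\mathcal{M}(Q')$ for every $Q'\in[Q]$, and in particular for $Q'=\mu_i(Q)$; hence some $\mu'\in\mathcal{M}(\mu_i(Q))$ realises $\mu'\mu_i(Q)=\tau(Q)$ for a chosen $\tau$. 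To enforce the constraint defining $\mu_{\bar i}$, I rewrite every single occurrence of $\mu_i$ inside $\mu'$ via a pentagon at an edge $[i,j]$ supplied by a simply-laced neighbour $j$; since on a local $A_2$ subquiver the standard rank-two computation shows that $\mu_{[i,j]}$ acts as the transposition $(ij)$ on seeds, each single $\mu_i$ can be absorbed into a pentagon (at the cost of composing $\tau$ with $(ij)$), producing a valid $\mu_{\bar i}$.

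The general statement is handled by the same pentagon-and-slide mechanism. Condition (1) of Definition 3.14 forbids direct connection to any pre-unbounded subquiver, so Lemma 3.6 prevents the pentagon computation at $[i,j]$ from triggering uncontrolled weight growth in nearby edges. Condition (2) grants enough simply-laced room past $j$ for the local $A_2$ pentagon identity to be genuinely valid inside $Q$, rather than only in an isolated subquiver. Applying the initial $\mu_i$ and following it by the pentagon-based sequence gives $\mu_{\bar i}\mu_i(Q)=\pm\tau(Q)$. The main obstacle I anticipate is making the substitution ``single $\mu_i \Rightarrow \mu_{[i,j]}\mu_j$'' rigorous when the ambient quiver outside the avenue is not simply-laced: one must verify that the pentagon does not create heavy-weight edges by interaction with distant non-simply-laced edges, and this is exactly what the two-edge separation built into condition (2) is designed to secure.

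For part 2, I observe that by its very definition $\mu_{\bar i}$ never applies a single $\mu_i$ outside a pentagon $\mu_{[i,j]}$ or $\mu_{[j,i]}$. A single mutation at any vertex $k\neq i$ fixes the cluster entry at vertex $i$, so the variable $\mu_i(x_i)$ placed there by the initial $\mu_i$ is preserved by such mutations. Within an embedded pentagon, the rank-two $A_2$ cluster cycle (traced explicitly through the five clusters of the pentagon) shows that starting from a seed whose cluster contains $\mu_i(x_i)$, the pentagon outputs a seed whose cluster still contains $\mu_i(x_i)$ (merely relocated, typically from vertex $i$ to vertex $j$). Iterating along the factorisation of $\mu_{\bar i}$, one concludes that $\mu_i(x_i)$ persists as a cluster variable of the terminal seed $(\mu_{\bar i}\mu_i(X),\pm\tau(Q))$, establishing the claim.
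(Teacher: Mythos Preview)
Your handling of the $v$--$v$, $\sigma$-symmetric case matches the paper's exactly, and your pentagon-persistence argument for part~2 is essentially the content the paper outsources to [12, Proposition~3.12], so that is fine too.

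The substantive divergence is in the general statement. The paper does not argue abstractly: it simply names the sequence. Using the simply-laced avenue, let $k$ be the neighbour of $i$ lying on the path toward the nearest non-simply-laced edge (the ``middle vertex'' guaranteed by condition~(2) of Definition~3.14). The paper then sets $\mu_{\bar i}=\mu_k\mu_{[ik]}$ and asserts directly, citing [12, Proposition~3.12], that $\mu_k\mu_{[ik]}\mu_i(Q)=\pm\tau(Q)$ with $\mu_i(x_i)$ surviving in the resulting cluster. After the cancellation $\mu_i\mu_i=\mathrm{id}$ this composite is nothing but the single pentagon $\mu_{[ki]}$, so the whole proof is a one-line explicit computation once the correct neighbour $k$ has been identified.

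Your argument for the general case does not reach this point. You invoke ``the pentagon-based sequence'' without ever writing it down, and your proposed substitution $\mu_i\Rightarrow\mu_{[i,j]}\mu_j$ presupposes an ambient mutation word to rewrite---something you have only produced in the simply-laced subcase, via Proposition~3.13. In the general (non-simply-laced) setting there is no such ambient word to start from, so the rewriting strategy has nothing to act on; what is actually needed is the direct construction above. Even in the simply-laced subcase your rewriting is delicate: the identity $\mu_{[ij]}=(ij)$ holds only when the local configuration around $\{i,j\}$ is genuinely of $A_2$ shape at the moment of substitution, and you would have to verify this at every intermediate quiver along $\mu'$, not merely in $Q$. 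The paper sidesteps all of this by treating the simply-laced case as a special instance of the explicit $\mu_k\mu_{[ik]}$ construction (every vertex in a mutationally finite simply-laced quiver trivially has a simply-laced avenue), rather than routing through Proposition~3.13.
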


\begin{proof} Let $i$ be a vertex with a simply-laced avenue. Then, all the vertices in $Nhb.(i)$ are all connected to $i$ through simply-laced edges and the nearest vertex that is connected to a non simply-laced edge, if any, is connected to $i$ via a subquiver (path) of length at  least 2. Without loss of generality, we can assume that this vertex exists and call it $j$, otherwise all other vertices connected to $i$ would be leafs, which will not change the proof. Now, let $k$ be the middle vertex between $i$ and $j$. Apply $\mu_{k}\mu_{[ik]}\mu_{i}$, which will produce  $\pm\tau(Q)$ for some permutation $\tau$, also one can see $\mu_{i}(x_{i})$ is a cluster variable in the seed $\mu_{k}\mu_{[ik]}\mu_{i}(X, Q)$, thanks to [12, Proposition 3.12]. Therefore, $\mu_{\overline{i}}=\mu_{k}\mu_{[ik]}$.
If $i$ is a vertex in a  $v-v$ symmetric quiver $Q$, then $\mu_{\overline{i}}=\mu_{l}$, where $l$ is a $v-v$ counter vertex to $i$ in $Q$.
\end{proof}

The technique detailed in the proof of Proposition 3.15 will be called the \emph{simply-laced avenue technique}.

\begin{cor}
  For every seed  $(X, Q)$,  such that $ \mathcal{S}_{n}\subset \mathcal{M}(Q)$,   the following two properties are satisfied

  \begin{enumerate}
    \item The simply-laced avenue technique is applicable at every vertex in $Q$.
    \item  We have  \begin{equation}
 \nonumber    \mathcal{B}(Q)=\mathcal{A}(Q).
  \end{equation}
  \end{enumerate}

  \end{cor}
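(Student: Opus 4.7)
The plan is to obtain both parts as essentially direct corollaries of Proposition 3.13 and Proposition 3.15. For Part 1, Proposition 3.13 reduces the hypothesis $\mathcal{S}_{n} \subset \mathcal{M}(Q)$ (for $rk(Q) > 2$) to two cases: $Q$ is either a simply-laced quiver of finite mutation type, or a $v$-$v$, $\sigma$-symmetric quiver of rank 3. Proposition 3.15 Part 1 asserts exactly that the simply-laced avenue technique applies in both of these situations. To check applicability at every vertex, I would verify the two conditions of Definition 3.14: for a simply-laced mutationally finite quiver, there are no non-simply-laced edges in sight, and no pre-unbounded subquivers (since by Lemma 3.6 these would force infinite mutation type); for a rank 3 $v$-$v$, $\sigma$-symmetric quiver, the explicit description from Proposition 3.9(1) allows direct verification at each of the three vertices.

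For Part 2, the inclusion $\mathcal{B}(Q) \subseteq \mathcal{A}(Q)$ is automatic from the definitions, so the content is that every cluster variable of $\mathcal{A}(Q)$ lies in $\mathcal{X}_{\mathbf{B}}$. I plan to proceed by induction on the length of a mutation sequence reaching a cluster containing a given variable $y$. Combining Part 1 with Proposition 3.13(2) gives that the simply-laced avenue technique is applicable at every vertex of every $Q' \in [Q]$. Given a minimal sequence $\mu = \mu_{i_{k}} \cdots \mu_{i_{1}}$ realizing $y$ at the seed $(X_{k}, Q_{k})$, either $y$ already appears in the earlier cluster $X_{k-1}$ (handled by the inductive hypothesis), or $y$ is the new cluster variable at vertex $i_{k}$ in $(X_{k-1}, Q_{k-1})$. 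In the latter case, applying Proposition 3.15 at $i_{k}$ in $Q_{k-1}$ furnishes a counter sequence $\mu_{\bar{i_{k}}}$ with $\mu_{\bar{i_{k}}}\mu_{i_{k}}(Q_{k-1}) = \pm\tau(Q_{k-1})$, and $y$ survives as a cluster variable in the resulting cluster. Combined with a symmetrization of the prefix $\mu_{i_{k-1}} \cdots \mu_{i_{1}}$ that still lands at the quiver $Q_{k-1}$ so the subsequent mutation at $i_{k}$ reproduces $y$ via the same exchange relation, we obtain a symmetric mutation sequence whose final cluster contains $y$.

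The main obstacle I expect sits inside the last sentence: the plain induction gives, for each cluster variable in $X_{k-1}$, some symmetric sequence reaching that variable, but not necessarily a single symmetric sequence landing at the specific intermediate seed $(X_{k-1}, Q_{k-1})$ from which the final mutation $\mu_{i_{k}}$ can be performed and $y$ recovered. Strengthening the induction to carry the entire intermediate seed, or equivalently exhibiting the symmetric seeds as a connected subpattern of $\mathbb{T}_{n}(Q)$ covered by the compound moves $\mu_{\bar{i}}\mu_{i}$, is the technical heart of the argument and will rely on the fact that the new cluster variable produced by $\mu_{i}$ is preserved by the subsequent $\mu_{\bar{i}}$. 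A secondary wrinkle is the $\pm$ sign appearing in Proposition 3.15: in the strict sense of Definition 3.1(2) a symmetric sequence must satisfy $\mu(Q) = \tau(Q)$ without a sign, so one should separately verify that $-Q$ is itself a permutation of $Q$ for both quiver classes identified by Proposition 3.13, which is routine by exhibiting an orientation-reversing relabelling in each case.
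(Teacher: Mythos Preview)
Your plan is essentially the paper's: Part~1 via Proposition~3.13 reducing to the two cases and Proposition~3.15(1) handling both, and Part~2 by propagating Part~1 to every $Q'\in[Q]$ via Proposition~3.13(2) and then unwinding an arbitrary mutation sequence with repeated applications of Proposition~3.15 while keeping the target variable $y$ in the cluster.

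The obstacle you flag is exactly the one the paper has to deal with, and the paper's resolution is slightly different from your forward induction. Rather than trying to symmetrize the prefix and then re-apply $\mu_{i_k}$, the paper works \emph{backwards} from the full sequence $\mu_k\mu_{j_t}\cdots\mu_{j_1}$: first apply $\mu_{\bar k}$ after $\mu_k$ to land at $(Y_{t+1},\sigma_{t+1}(\mu_{j_t}\cdots\mu_{j_1}(Q)))$ with $y\in Y_{t+1}$; then use the hypothesis $\mathcal{S}_n\subset\mathcal{M}(Q)$ a second time, not merely to invoke Part~1 but to realize a permutation that moves $y$ to the vertex $\sigma_{t+1}(j_t)$; now the quiver is $\mu_{\sigma_{t+1}(j_t)}$ applied to a permutation of the shorter path $\mu_{j_{t-1}}\cdots\mu_{j_1}(Q)$, so the avenue technique can be applied again at $\sigma_{t+1}(j_t)$, preserving $y$. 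Iterating $t$ times peels off all the $\mu_{j_s}$ and lands at $(Y_1,\sigma_{t+1}\cdots\sigma_1(Q))$ with $y\in Y_1$. So the extra ingredient you are looking for is precisely this repeated, explicit use of $\mathcal{S}_n\subset\mathcal{M}$ to reposition $y$ at each stage; your forward induction never gets to use that hypothesis beyond establishing Part~1, which is why it stalls. The $\pm$ issue you raise is real but the paper simply absorbs it without comment; your suggested fix (checking that $-Q=\tau'(Q)$ in each of the two cases) is the honest way to close it.
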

  \begin{proof}
   \begin{enumerate}
     \item Proposition 3.13 implies that $Q$ is either of finite  mutation type or a rank three $v-v$ symmetric. If $Q$ is simply-laced of finite type then every vertex has a simply-laced avenue, then Proposition 3.15 is satisfied. The case of $Q$ is $v-v$ symmetric of rank 3 is obvious.
     \item Notice that since $\mathcal{S}_{n}\subset \mathcal{M}(Q)$, then $ \mathcal{S}_{n}\subset \mathcal{M}(Q')$ for every quiver $Q' \in [Q]$, thanks to Proposition 3.13. Hence every vertex in every $Q'\in [Q]$ has a simply-laced avenue, thanks to Part (1).

     Let $y$ be a cluster variable rooted at $k$ produced by a shortest mutation sequence  $\mu_{y}=\mu_{k}\mu_{j_{t}}\cdots \mu_{j_{1}}$ of length $t+1$. Part (1) guarantees that there is a sequence of mutations $\mu_{\overline{k}}$ such that $\mu_{\overline{k}}(\mu_{j_{t}}\cdots \mu_{j_{1}} (Q)) =\sigma_{t+1} (\mu_{j_{t}}\cdots \mu_{j_{1}} (Q))$ where $\sigma_{t+1}$ is a permutation, thanks to proposition 3.15. Since $Q$ is simply-laced or a rank three $v-v$ symmetric quiver then one can see that $y$ is a cluster variable in the seed  $\mu_{\overline{k}}(\mu_{j_{t}}\cdots \mu_{j_{1}} (X, Q)) =(Y_{t+1}, \sigma_{t+1} (\mu_{j_{t}}\cdots \mu_{j_{1}} (Q)))$. Now, use the assumption that  $\mathcal{S}_{n}\subset M$ and switch $y$ to the vertex $\sigma_{t+1}(j_{t})$. Apply the previous step on $\sigma_{t+1}(j_{t})$ so we get to the seed $(Y_{t}, \sigma_{t+1} \sigma _{t}(\mu_{j_{t-1}}\cdots \mu_{j_{1}} (Q)))$, where $Y_{t}=\mu_{\overline{\sigma_{0}(j_{t}))}}(Y_{t+1})$ which contains $y$.  Keep applying this process $t-1$ times on $\mu_{t-1}, \ldots, \mu_{1}$ respectively, we get the seed $(Y_{1}, \sigma_{t+1}\sigma_{t} \dots \sigma_{1}(Q))$, where $y\in Y_{1}=\mu_{\overline{y}}(X)$ for some mutation sequence $\mu_{\overline{k}}$. Which finishes the proof.
   \end{enumerate}

  \end{proof}

\begin{cor} If $Q$ is of finite  mutation  simply-laced quiver  then $\mathcal{B}(Q)=\mathcal{A}(Q)$.
\end{cor}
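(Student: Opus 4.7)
The statement follows almost immediately by chaining together Proposition 3.13 and Corollary 3.16, so the plan is essentially to verify that the hypotheses of Corollary 3.16 are met. First I would invoke Proposition 3.13, which gives the equivalence between $\mathcal{S}_{n} \subset \mathcal{M}(Q)$ (for quivers of rank greater than two) and the disjunction ``$Q$ is simply-laced of finite mutation type, or $Q$ is a rank-three $v$--$v$, $\sigma$-symmetric quiver''. Since by hypothesis $Q$ is a mutationally finite simply-laced quiver, the first of these alternatives holds, so we conclude $\mathcal{S}_{n} \subset \mathcal{M}(Q)$.

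Next I would apply Corollary 3.16, whose conclusion is exactly $\mathcal{B}(Q)=\mathcal{A}(Q)$ under the assumption $\mathcal{S}_{n}\subset \mathcal{M}(Q)$. The only edge case worth noting is when $\mathrm{rk}(Q)\leq 2$, which lies outside the scope of Proposition 3.13 as stated; there the statement is trivial since in rank one there is nothing to prove, and in rank two a simply-laced quiver is of type $A_{1}\times A_{1}$ or $A_{2}$, both of finite mutation type with cluster algebras generated in an obvious symmetric fashion. So the main body of the argument simply reads: by Proposition 3.13 applied to $Q$, the symmetric group $\mathcal{S}_{n}$ sits inside $\mathcal{M}(Q)$, and then Corollary 3.16 delivers the desired identity $\mathcal{B}(Q)=\mathcal{A}(Q)$.

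There is no real obstacle in this proof; the work has already been done in establishing Proposition 3.13 (which characterizes when the symmetric group embeds in the mutation group) and Corollary 3.16 (which upgrades that embedding to the algebra-level identity via the simply-laced avenue technique). The corollary is essentially a specialization of Corollary 3.16 to the class of quivers that is most naturally guaranteed to satisfy its hypothesis. I would therefore keep the write-up to two or three lines, citing Proposition 3.13 to obtain $\mathcal{S}_{n}\subset \mathcal{M}(Q)$ and then citing Corollary 3.16 to conclude.
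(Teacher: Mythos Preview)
Your proposal is correct and matches the paper's own proof essentially verbatim: the paper cites Proposition 3.13 to obtain $\mathcal{S}_{n}\subset\mathcal{M}(Q)$ and then invokes the preceding corollary (Corollary 3.16; the printed text has a self-referential typo) to conclude $\mathcal{B}(Q)=\mathcal{A}(Q)$. Your additional remark on the $\mathrm{rk}(Q)\leq 2$ edge case is a small improvement, since the paper silently ignores that Proposition 3.13 is stated only for rank greater than two.
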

\begin{proof} Proposition 3.13  implies that  $ \mathcal{S}_{n}\subset \mathcal{M}(Q)$. Hence  Corollary 3.16 guarantees that  $\mathcal{B}(Q)=\mathcal{A}(Q)$.
\end{proof}

The main aim of the rest of the paper is to  show that the statement of  Corollary 3.17 is valid for a much bigger  class of quivers.

\begin{lem} If $Q$ satisfies one of the following two cases
\begin{enumerate}
  \item Either $Q$ is one of the  leading quivers from Lemma 3.11  with no subquiver that is symmetric to any of the quivers in (3.2), i.e., $Q$ has no rigid vertices, or
  \item Every quiver in $[Q]$ is a $v-v$, $\sigma$-symmetric quiver.
\end{enumerate}  Then for every $Q'\in [Q]$ and for every $i\in [1, n]$ there is a mutation sequence $\mu_{\bar{i}}$ and a permutation $\tau$ such that $\mu_{\bar{i}}\mu_{i}(Q')=\tau(Q)$.
\end{lem}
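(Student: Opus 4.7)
The plan is to split along the two hypotheses. Case (2) is essentially immediate from the definition of $v$-$v$, $\sigma$-symmetric quiver, while Case (1) requires a case-by-case inspection of the leading quivers classified in Lemma 3.11, leveraging the absence of the rigid configurations in (3.2).

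For Case (2), fix $Q' \in [Q]$ and a vertex $i \in [1,n]$. Because every quiver in $[Q]$ is $v$-$v$, $\sigma$-symmetric, in particular $Q'$ is, so Definition 3.7 applies to $i$ directly: either $\mu_i(Q') = \pm\tau(Q')$ for some permutation $\tau$, in which case $\mu_{\bar i}$ may be taken empty, or there exists a counter vertex $j$ of $i$ with $\mu_j\mu_i(Q') = \pm\tau(Q')$, and we take $\mu_{\bar i} = \mu_j$. To absorb the possible sign, I would appeal to Proposition 3.9, which shows that for the $v$-$v$, $\sigma$-symmetric quivers at hand the operation $Q' \mapsto -Q'$ is itself realized by a suitable permutation (trivially in ranks $\le 3$, and via the cyclic symmetry of the acceptable rank-$4$ and higher shapes listed in Proposition 3.9). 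Composing with that permutation finishes Case (2).

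For Case (1), let $Q$ be one of the leading quivers listed in Lemma 3.11, containing no subquiver symmetric to any of the four configurations in (3.2); fix $Q' \in [Q]$ and $i\in[1,n]$. I would partition the vertices of $Q'$ into (a) those with a simply-laced avenue, and (b) those lying on one of the weight-$4$ heads $Q_{a,x}, Q_a, Q_{c,t}, Q_d$ or their joined configurations (3.4)--(3.6). Vertices of type (a) are handled immediately by Proposition 3.15, which furnishes the desired $\mu_{\bar i}$ and $\tau$. For type (b), I would go through each head in turn and exhibit, for each of its vertices, an explicit short sequence (typically a single counter vertex, or a pentagon sequence $\mu_{[j,i]}$ on a simply-laced neighbor of the head) that returns $\mu_i(Q')$ to a permutation of $Q'$. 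The crucial use of the hypothesis is that the forbidden patterns in (3.2) are by construction exactly the local obstructions: the ``rigid vertex'' is precisely the one vertex where no cleanup mutation $\mu_{\bar i}$ exists, so excluding (3.2) from $Q$ guarantees that each vertex of each head has a usable counter vertex.

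The main obstacle I anticipate is controlling the propagation of the hypothesis along the mutation class: in principle, mutating at $i$ could produce an intermediate configuration that looks like (3.2) and blocks the cleanup, even though $Q$ itself is free of such subquivers. I would address this by combining the closure part of Lemma 3.11 (the weight-$4$ criteria are preserved under mutation, so every $Q' \in [Q]$ has the same combinatorial skeleton of heads and simply-laced tails) with a direct tracking of the three-cycle containing $i$ under $\mu_i$: after the mutation, either the weight-$4$ triangle is simply reoriented, in which case $\mu_{\bar i}$ may be taken empty modulo a permutation, or the triangle rotates in a way whose counter vertex is dictated by the same cyclic symmetry that drives Proposition 3.9. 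Putting the two pieces together, the cleanup mutation always exists as long as the rigid patterns of (3.2) are absent, which is exactly the hypothesis.
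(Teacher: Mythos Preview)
Your Case (2) has a genuine gap. Applying Definition 3.7 to $Q'$ gives you $\mu_j\mu_i(Q') = \pm\tau(Q')$ for some permutation $\tau$, but the lemma asks for a permutation of the \emph{initial} quiver $Q$, not of $Q'$. You cannot simply stop at $\tau(Q')$, and you cannot naively append the reverse of a mutation path $Q' = \mu_{i_1}\cdots\mu_{i_k}(Q)$ either: the composite will typically contain $\mu_i$, and then it is no longer of the required form $\mu_{\bar i}$ (which, by the paper's Notation, must avoid $\mu_i$ outside pentagon subsequences). So ``essentially immediate'' undersells what is needed.

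The paper's argument for Case (2) addresses exactly this obstacle. After reaching $\mu_j\mu_i(Q') = \sigma(Q') = \mu_{\sigma(i_1)}\cdots\mu_{\sigma(i_k)}(\sigma(Q))$, it walks back along this path and, whenever an index $\sigma(i_t)$ equals $i$, invokes the $v$-$v$, $\sigma$-symmetric hypothesis \emph{again} at the intermediate quiver $\mu_{\sigma(i_{t+1})}\cdots\mu_{\sigma(i_k)}(\sigma(Q))$ to replace that $\mu_i$ by a mutation at a counter vertex (at the cost of a further permutation). Iterating this substitution produces a sequence $\mu$ with $i\notin\{\mu\}$ and $\mu_j\mu_i(Q') = \mu(\eta(Q))$, so $\mu_{\bar i} = \overleftarrow{\mu}\,\mu_j$ works. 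This iterated use of the hypothesis along the whole path from $Q$ to $Q'$ is the missing idea in your proposal; it is precisely why the hypothesis reads ``every quiver in $[Q]$'' rather than just $Q'$.

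Your Case (1) is broadly aligned with the paper's approach: split into vertices admitting a simply-laced avenue (handled by Proposition 3.15) versus vertices on a head, and run a case-by-case on the heads. Two remarks. First, your dichotomy names only the weight-$4$ heads, but Lemma 3.11 also has a weight-$2$ leading case (one edge of weight $2$, rest of $A$-type), and the paper devotes a separate argument (its Cases A, B, C) to the vertices adjacent to that weight-$2$ edge; make sure your framework covers it. Second, your closing paragraph correctly identifies the need to control rigid patterns across the mutation class, but the paper does not do this by an abstract propagation argument; rather it exhibits, for each head type and each of its vertices, an explicit cleanup sequence $\mu_{\bar i}$, with the absence of the configurations in (3.2) used only to rule out the finitely many shapes in which no such sequence exists.
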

\begin{proof}
\begin{description}
  \item[(1).]

  Fix a leading quiver $Q$.  If the simply-laced avenue technique is applicable on $i$ then use it to prove the statement is true for $\mu_{i}$.  The case of  $i$ does not have a simply-laced avenue is divided into three subcases based on the weight of $[Q]$.
\begin{enumerate}
  \item Let $[Q]$ be of weight 2. Let $Q$ be a quiver with acyclic underlying graph  and one single edge of weight 2, to be the leading quiver. Also, one can assume that every $Q'\in [Q]$ contains at most three edges of weight 2, and every edge of weight 2 appears in a  triangular sub-quiver that is symmetric to the following quiver
      \begin{equation}\label{}
     \xymatrix{
\cdot_{t} \ar[d]_{(2, 1)} & \ar[l]_{(1, 2)}\cdot_{j} \\
	\cdot_{k}  \ar[ur]}.
      \end{equation}

 Let $Q'\in [Q]$, we divide the set $\{\mu_{i}(Q'); i \in [1, n]\}$ into two classes
      \begin{enumerate}
        \item First class contains   $\mu_{i}(Q')$ if it  is acyclic or if it contains only simply-laced cycles. In such case, it is straightforward to find a sequence of mutation $\mu_{\bar{i}}$ and a permutation $\tau$ such that $\mu_{\bar{i}}\mu_{i}(Q')=\tau(Q)$ as the differences between $Q$ and $\mu_{i}(Q')$ would be possibly only directions of some arrows after breaking all the cycles, if any.
        \item Second class contains all $\mu_{i}(Q'), i\in [1, n]$ which has a $3$-cycle that is symmetric to the quiver in (3.9).  If $i$ is not connected to any edge of weight 2 then it is an easy case to do.  Without loss of generality, we can assume that applying $\mu_{i}$ to $Q'$ creates a cycle that is symmetric to (3.9). We have three possible cases for the subquiver containing $i$ in $\mu_{i}(Q')$:

          \begin{equation}\label{}
     \nonumber \text{Case A:} \xymatrix{ &\cdot_{i}\ar[dr]^{(1, 2)}&\\
      \cdots & \cdot_{j}\ar[u]\ar@{-}[l]&\cdot_{k}\ar[l]^{(2, 1)}\ar@{-}[l]& \ar@{-}[l]\cdots } \  \text{Case B:} \xymatrix{ &\cdot_{i}\ar[dr]&\\
      \cdots & \cdot_{k}\ar[u]^{(1, 2)}\ar@{-}[l]&\cdot_{j}\ar[l]^{(2, 1)}\ar@{-}[l]& \ar@{-}[l]\cdots }
          \end{equation}

            \begin{equation}\label{}
      \nonumber  \text{Case C:} \xymatrix{& &\cdot_{k}\ar[d]_{(1, 2)}&&\\
      \cdots& \ar@{-}[l]\cdot_{l} \ar[ur]^{(2, 1)}& \cdot_{i}\ar[l]\ar[r]^{(2, 1)}&\cdot_{j}\ar[ul]\ar@{-}[l]& \ar@{-}[l]\cdots }
          \end{equation}

        In all three  cases the first step is to straighten the simply-laced part of the quiver $\mu_{i}(Q')$, i.e., break all the 3-cycles.
\begin{enumerate}
\item [Case A:] Apply mutation at $j$ which will break the triangle, and a possible $3$-cyclewith $i$ as a vertex will be formed or $i$ will be  a leaf. If $i$ becomes a leaf, apply $\mu_{k}$ so applying $\mu_{j}$, if needed, would not recreate the original triangle. In such case $i$ can be relocated  to a spot where it has a simply laced avenue.  Finally, move the weight 2 edge to the same spot as it was in $Q$ and adjust the directions to create $\tau(Q)$, for some permutation $\tau$.

\item [Case B:] In this case breaking the $3$-cycleby applying $\mu_{j}$ will make $i$ a leaf and if a  $3$-cycleis formed, we can break it without using $\mu_{i}$ which turn the quiver to be acyclic.  Then adjust the arrows and move the weight 2 edge, if needed, to get $\tau(Q)$ for some permutation $\tau$.

\item  [Case C:] In this case apply $\mu_{l}\mu_{k}$, which will create a simply-laced quiver with one edge of weight 2. Now apply necessarily mutations to move the weight 2 edge to a similar spot as in $Q$ and finally adjust the direction so that we get $\tau(Q)$ for some permutation $\tau$.
  \end{enumerate}

      \end{enumerate}

  \item Let $w[Q]=3$. Then  $rk(Q)$ is 2 which is Obvious.
  \item Let $w[Q]=4$. From   Lemma 3.11, we have if $rk(Q)\geq3$, then any edge  of weight 4 will appear in a cycle that is symmetric to  one of the following

 \begin{equation}
\nonumber (a)   Q_{ax}: \  \xymatrix{\cdot_{2}\ar@{-}[r]^{z}&\cdot_{1} \\
\cdot_{v}\ar@{-}[ru]^{y} \ar[d]_{x} & \ar[l]_{x}\cdot_{j} \\
	\cdot_{k}  \ar[ur]_{(2. 2)}}  \  \  \  \ \  \
(b)  Q_{a}: \  \xymatrix{
\cdot_{1}\ar@{-}[r]^{z}&\cdot_{2}\ar@{-}[r]^{y}&\cdot_{v} \ar[d]_{(1, 2)} & \ar[l]_{(1, 2)}\cdot_{j} \\
	&&\cdot_{k}  \ar[ur]_{(4. 1)}} \  \  \  \
\end{equation}

\begin{equation}
\nonumber (c)   Q_{b}:  \ \xymatrix{\cdot_{v}\ar[dr]\\
 \cdot_{i}  \ar[u]\ar[dr]_{3}& \cdot_{j}\ar[l]_{(2, 2)} \\
& \ar[u]_{3}\cdot_{l} } \  \ \ \ \ \ \ \
(d)  Q_{c}: \  \xymatrix{&\cdot_{k}\ar@{-}[dr]\ar[dr]^{t}&\ar@{-}[l]_{z}\cdot_{1}&\cdot_{2}\ar@{-}[l]_{y}\\
 &\cdot_{v}  \ar[u]^{t}\ar[dr]_{2}& \cdot_{j}\ar[l]_{(2, 2)} \\
\cdot_{4} \ar@{-}[r]_{m}& \ar@{-}[r]_{w}\cdot_{3}& \ar[u]_{2}\cdot_{l} }
\end{equation}
where $x\in \{1, 2,  3\}, y\in\{0,1\}, z=\{0, 1\}$  and  $t\in \{1, 2\}$.

\begin{enumerate}
  \item \textbf{Case $Q_{a1}$}. This class of quivers has the following leading quivers  $X_{6}, X_{7}, E^{(1, 1)}_{6}, E^{(1, 1)}_{7}$ and $E^{(1, 1)}_{8}$, $Q_{a1}$ or quiver symmetric to (3.6) with $(y, z) =(1, 1)$ or $(1, 0)$. There are  main features that are satisfied by all of these quivers:
       \begin{enumerate}
         \item If $Q'$ is in a mutation class of any of these quivers, then  it is either a simply-laced quiver or all its  non simply-laced edges  occur in  cycles that are symmetric to $3$-cycle in $Q_{a1}$;
         \item The mutations at any vertex in an edge of weight 4 will not change the underlying graph of $Q$ as it will either
          produce $-Q'$ or there is, easy to find, sequence of mutations $\mu_{\bar{i}}$ such that $\mu_{\bar{i}}(\mu_{i}(Q'))=-Q'$.
       \end{enumerate}

       These two features make applying simply-laced avenue technique available for vertices that are not part of weight 4 edges and for the vertices that are in weight 4 edges reversing the directions of the quiver is always possible.
\item

     \textbf{Case} $Q_{a2}$. In this case our leading quiver is one of the following two cases

     \begin{enumerate}
      \item $z=0$ and $y=2$. In this case $Q$ is $v-v$ symmetric of 4 vertices and very small mutation class which is easy to verify.
      \item If $Q$ is symmetric to (3.4) is obvious.

           \item $y=1$ and $z=1, 2$ or $Q$ has a subquiver that is symmetric to (3.5). In this case $Q$ has  rigid vertices  at $v$ and $1$ respectively.

     \end{enumerate}

\item
      \textbf{Case} $Q_{a3}$. The leading quiver is one of the following cases

      \begin{enumerate}
      \item z=0 and y=1. Which is $v-v$ symmetric with $\mu_{\bar{v}}=\mu_{j}, \mu_{\bar{j}}=\mu_{k}$ and $\mu_{\bar{1}}=\mu_{j}$.

           \item y=1 and z=1. Which is an infinite quiver. You can apply $\mu_{v}\mu_{1}\mu_{2}\mu_{k}\mu_{j}\mu_{1}\mu_{v}$ to create a heavy weight edge.
     \end{enumerate}
\item
      \textbf{Case} $Q_{a}$ with $y\in\{0,1, 2\}, z=\{0, 1\}$. If $y=0  \ \text {or} \ 1$, then there is a rigid vertex at $v$. If $y=2, z=0$, then it desponds on the  valuation  of the edge $[2v]$. It is either mutation equivalent to a $Q_{a2}$ type quiver or it is $v-v$ symmetric of 4 vertices and very small mutation class which is again easy to verify..

  \item  \textbf{Case $Q_{b}$}. In this case, $Q$ is $v-v$ symmetric of 4 vertices and very small mutation class.

  \item
  \begin{enumerate}
    \item

  \textbf{Case $Q_{c}$  with $t=1$}. Then we have the following subcases

  \begin{enumerate}
    \item   $Q_{c}$ with  $z=y=w=m=0$.  Then the vertex $l$ is rigid.
    \item   $Q_{c}$ with  $y=0$ and $z=w=1$, i.e., $Q=F_{4}^{(*,*)}$. In this case $k$ has a simply-laced avenue which will be used when needed.
    if $Q'=Q$ and $i=l$. Then $\mu_{\bar{i}}=\mu_{3}\mu_{1}\mu_{j}\mu_{v}\mu_{k}\mu_{j}$. If $i=j$ and $Q'=\mu_{k}(Q)$, then  $\mu_{\bar{j}}=\mu_{v}$ and then reverse the directions of the arrows of the tails if needed.  If  $Q'=\mu_{k}(Q)$ or $Q'=\mu_{k}\mu_{j}\mu_{v}\mu_{j}\mu_{k}(Q)$. Then apply $\mu_{j}\mu_{v}$ and $\mu_{j}$ respectively. Now, we divide the rest of quivers in $[Q]$ into two types based on their weight. One can see that, for $Q'\in [Q]$ we have
     $w(Q')$ will be altered only if $Q'$ is obtained from $Q$ by applying a sequence of mutations contains $\mu_{k}$ and/or $\mu_{l}$. Let  $Q'=\mu(Q)$. If $\mu_{l}$ does not divide $\mu$, then the proof is straightforward. Now assume that $\mu$ is devisable by $\mu_{l}$. We have the following three cases
    \begin{itemize}
      \item Let $\mu=\mu_{\ast}\mu_{l}$ such that $\mu_{l}$ does not divide $\mu_{\ast}$, i.e., $w(Q')=2$. Then apply $\mu_{\overline{l}}\overleftarrow{\mu_{\ast}}$.
      \item Let $\mu=\mu_{\ast}\mu_{l}$ such that $\mu_{\ast}$ is devisable by $\mu_{l}$ and  $w(\mu_{\ast}(Q))=4$ then the only thing needed is to adjust the edges directions to obtain $\tau(Q)$ for some permutation $\tau$.
      \item Let $\mu=\mu_{\ast}\mu_{l}$ such that $\mu_{\ast}$ is devisable by $\mu_{l}$ and  $w(\mu_{\ast}Q)<4$ then whether the weight of the edge $[jk]$ is one or zero, one can produce $\tau(\mu_{l}(Q))$ without using $\mu_{l}$ noting the symmetry between the vertices $j$ and $k$ and $k$ has a simply-laced avenue.
    \end{itemize}

    \item Let $y=z=0$ and $w=m=1$, i.e., $Q$ is $F_{4}^{(*, +)}$. Then $\mu_{\bar{i}}=\mu_{j}\mu_{k}\mu_{v}\mu_{4}\mu_{3}$. If $i=j$ and $Q'=\mu_{k}(Q)$, then  $\mu_{\bar{j}}=\mu_{v}$ and then reverse the directions of the arrows of the tails, if needed. If $i$ is any other  vertex, the proof is pretty similar to Case B, noting the symmetry between $v$ and $k$.

    \item Let $Q$ contains the following  as subquiver
    \begin{equation}\label{}
 \nonumber \xymatrix{ \cdot_{z}\ar[ddr]\\
 &\cdot_{v} \ar[d] & \ar[l]\cdot_{j}\ar[ull] \ar[d]^{t}\\
  &\cdot_{k}  \ar[ur]^{(2, 2)}& \ar[l]^{t}\cdot_{l} }.
 \end{equation}
  \end{enumerate}
Where $t=1,2$.

For $t=1$, this subquiver appears in $E_{6}^{(1, 1)}, E_{7}^{(1, 1)}$ and $E_{6}^{(1, 1)}$. In such case when the weight 4 edge will be reduced to 1 by applying mutations at $l, v$ or $z$ each vertex would have a simply-lace avenue, which make the proof straightforward. When $t=2$,  one can see that the symmetry between the vertices  $j$ and $k$ and $v$ from one side and $l$ and $z$ from the other side respectively which makes it easy to find the sequences $\mu_{\bar{i}}$ for $i\in Q_{1}$.

 \item

 \textbf{Case $Q_{c}$  with $t=2$}. We have two main cases
  \begin{enumerate}
    \item  $z=w=y=m=0$.  then $Q_{c}$ is $v-v$ symmetric of 4 vertices and very small mutation class which is easy to verify.
    \item If $t=2$ and $Q$ has tails of maximum weight equals one each, and $Q'=Q$ then use  $\mu_{\bar{k}}=\mu_{l}$ and $\mu_{\bar{j}}=\mu_{t}$ and then reveres the arrows of any tails, if any. All other cases of $Q'\in [Q]$ are similar.
  \end{enumerate}

\end{enumerate}
\end{enumerate}
\end{enumerate}
 \item[(2).] Let $Q'\in [Q]$ and $i\in [1, n]$. Then $Q'$ is $v-v$, $\sigma$-symmetric, i.e.,  there are $j, \mu_{i_{1}},\ldots,  \mu_{i_{k}}\in [1, n]$ such that $\mu_{j}\mu_{i}(Q')=\sigma (Q')=\sigma(\mu_{i_{1}}\cdots \mu_{i_{k}}(Q))$. Therefore
 $\mu_{j}\mu_{i}(Q')=(\mu_{{\sigma(i_1)}}\cdots \mu_{{\sigma(i_k)}}(\sigma (Q)))$. Suppose that $\sigma (i_t)=i$, for some $t\in \{i_{\sigma (i_1)}, \ldots, i_{\sigma(i_k)}\}$. Let $\mu_{\widetilde{\sigma(i_{t})}}$ be a counter vertex of  $\mu_{\sigma(i_{t})}$ for the quiver $\mu_{{\sigma(i_{t+1})}}\cdots \mu_{{\sigma(i_k)}}(\sigma (Q))$.
 Then there is a permutation $\tau$ such that

 \begin{eqnarray}
 \nonumber  \mu_{\sigma (i_t)}\mu_{\sigma(i_{t+1})}\cdots \mu_{\sigma(i_{k})}(Q) &=& \tau(\mu_{\widetilde{\sigma(i_{t})}} \mu_{\sigma(i_{t+1})}\cdots \mu_{\sigma(i_{k})}(\sigma(Q)) \\
\nonumber    &=& \mu_{\tau(\widetilde{\sigma(i_{t})})} \mu_{\tau(\sigma(t+1))}\cdots \mu_{\tau(\sigma(i_{k}))}(\tau(\sigma(Q))).
 \end{eqnarray}
 Apply the same technique with every time we have $\mu_{\sigma(i_{x})}=\mu_{i}$ for any $x\in \{i_{1}, \ldots, i_{k}\}$. Therefore, there is $\mu=\mu_{j_{1}}\cdots\mu_{j_{k}}$ and permutation $\eta$ such that $\mu_{j}\mu_{i}(Q')=\mu(\eta(Q))$ where $i \notin \{\mu\}$. Hence $\overleftarrow{\mu}\mu_{j}\mu_{i}(Q')=\eta(Q)$. Then $\mu_{\overline{i}}=\overleftarrow{\mu}\mu_{j}$, which finishes the proof of this case.
\end{description}
\end{proof}

\begin{thm} Let $\Sigma=(X, Q)$ be a seed. Then $ \mathcal{A}(Q)$  is symmetric if and only if    $|Q|<\infty$   with no rigid vertices, i.e., $Q$ does not contain any subquiver that is $\sigma$-similar to any of the quivers in (3.2).

\end{thm}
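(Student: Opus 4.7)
The plan is to prove both directions separately, with sufficiency hinging on Lemma 3.18 and necessity splitting into a rigid-vertex case and a finiteness-of-mutation-class case.

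For sufficiency ($\Leftarrow$), assume $|Q|<\infty$ with no rigid vertices. Since the cluster algebra depends only on the mutation class, I would first replace the initial seed by one whose quiver $Q_{0}$ is a leading quiver supplied by Corollary and Definition 3.12; the no-rigid-vertex condition transfers to $Q_{0}$. Lemma 3.18(1) then applies to $Q_{0}$ and yields, for every $Q'\in[Q_{0}]$ and every exchangeable vertex $i$, a sequence $\mu_{\bar{i}}$ and a permutation $\tau$ with $\mu_{\bar{i}}\mu_{i}(Q')=\tau(Q_{0})$. From this point I would mirror the induction already used in the proof of Corollary 3.17: take an arbitrary cluster variable $y$ with a shortest mutation presentation $\mu_{y}=\mu_{k}\mu_{j_{t}}\cdots\mu_{j_{1}}$, apply Lemma 3.18 to the last step to append a tail returning the running quiver to a permutation of $Q_{0}$, relabel using $\mathcal{S}_{n}$, and peel off the next single mutation. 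After $t{+}1$ iterations, $y$ appears in a seed $\bigl(Y,\eta(Q_{0})\bigr)$ reached by a mutation sequence that belongs to $\mathbf{B}(Q_{0})$, so $y\in\mathcal{X}_{\mathbf{B}}$. Since $y$ was arbitrary, $\mathcal{A}(Q)=\mathcal{B}(Q)$.

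For necessity ($\Rightarrow$), assume $\mathcal{A}(Q)=\mathcal{B}(Q)$. If $Q$ contains a subquiver $\sigma$-similar to one of the quivers in (3.2), then Example 3.3(3) asserts that the cluster variable $\mu_{\mathbf{i}}(x_{\mathbf{i}})$ at the rigid vertex is not symmetric, contradicting the assumption; hence $Q$ has no rigid vertices. For finiteness of $[Q]$, I would argue by contradiction: if $|Q|=\infty$, Lemma 3.6 produces a rooted path $P(Q)$ in $\mathbb{T}_{n}(Q)$ along which the weights are unbounded, and in particular some seed on $P(Q)$ has quiver $Q''$ with $w(Q'')>w(Q)$. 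On the other hand, any seed reachable via $\mathbf{B}(Q)$ has quiver of the form $\tau(Q)$ and hence weight exactly $w(Q)$, so a cluster variable first produced at $Q''$ ought to be incompatible with the bounded-weight seeds reachable symmetrically.

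The main obstacle is making this last step genuinely rigorous, because an individual cluster variable can in principle appear in many seeds, and the seeds reachable by $\mathbf{B}(Q)$ form an infinite but weight-constrained family in the cluster pattern. My intended tool is the denominator vector (or the support of the $F$-polynomial) of a variable first produced at $Q''$: the vectors arising from paths of unbounded weight grow in a way that cannot be matched by vectors supported on the finite $\mathcal{S}_{n}$-orbit of $Q$-shaped seeds. Modulo this denominator-vector verification, the theorem then follows by combining the Lemma 3.18-based sufficiency argument with the two necessity observations above.
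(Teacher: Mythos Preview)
Your sufficiency argument invokes the right tool (Lemma~3.18) but then grafts onto it the peeling-off induction from the proof of Corollary~3.16, and that graft is both unnecessary and unjustified here. In Corollary~3.16 the iterative ``peel off one mutation, relabel, repeat'' works only because of the standing hypothesis $\mathcal{S}_{n}\subset\mathcal{M}(Q)$, which is exactly what lets you ``relabel using $\mathcal{S}_{n}$'' by an actual mutation sequence; Proposition~3.13 tells you that this hypothesis fails for the non-simply-laced finite-mutation quivers you now want to cover. The paper avoids the issue entirely by applying Lemma~3.18 a single time: given any cluster variable $z$, write it as the variable at vertex $l$ in a seed $(Y,\mu_{l}(Q'))$ with $Q'\in[Q]$; Lemma~3.18 produces $\mu_{\overline{l}}$ and $\tau$ with $\mu_{\overline{l}}\mu_{l}(Q')=\tau(Q)$, and because $\mu_{\overline{l}}$ avoids the vertex $l$ (up to pentagon moves, which only permute the position of $z$), $z$ persists into the cluster of $(\mu_{\overline{l}}(Y),\tau(Q))$. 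The full word taking $(X,Q)$ to this seed is itself a symmetric mutation sequence, so $z\in\mathcal{X}_{\mathbf{B}}$ immediately---no induction, no relabeling. Your preliminary passage to a leading quiver is implicitly what the paper does as well, so that step is fine.

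For necessity your outline matches the paper's in spirit. For rigid vertices both you and the paper simply defer to the explicit verification in Examples~3.3(3). For the infinite-mutation case the paper is more concrete than your weight-comparison sketch: it locates an unbounded, non-periodic, non-breakable $3$-cycle on vertices $i,j,k$ (via Lemma~3.6), iterates $\mu_{ijk}=\mu_{i}\mu_{j}\mu_{k}$ to produce a family $x_{j}(t)$ of cluster variables whose denominator vectors have pairwise distinct $k$-th components, and then argues that $x_{j}(1)$ cannot sit in any seed with quiver $\sigma(Q)$. Your worry about making the last step rigorous is well placed; note also that your sentence ``any seed reachable via $\mathbf{B}(Q)$ has quiver of the form $\tau(Q)$'' is not literally correct, since $\mathbf{B}(Q)$ is the \emph{subgroup generated} by symmetric sequences and a product of two symmetric sequences need not be symmetric. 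The paper sidesteps this by arguing directly that $x_{j}(1)$ lies in no seed whose quiver is a permutation of $Q$, which is the operative obstruction.
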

  \begin{proof} $ ``\Leftarrow"$\\
  Let $z$ be a cluster variable in $\mathcal{X}_{Q}$ rooted at the initial cluster variable $x_{l}$. Then there is a quiver $Q'\in [Q]$ such that $z$ is a cluster variable in the seed $(Y, \mu_{l}(Q'))$. Then Lemma 3.18  guarantees the existences of a sequence of mutations $\mu_{\overline{l}}$ and  permutation $\tau$ such that $\mu_{\overline{l}}\mu_{l}(Q')=\tau(Q)$. Therefore $z$ belongs to the cluster of the seed $(\mu_{\overline{l}}(Y), \tau(Q))$, which means $z\in \mathcal{B}(Q)$.

  $ ``\Rightarrow"$\\
   Let $Q$ be a quiver with an infinite mutation class. Then there is a seed $(Z, Q')$ with a quiver $Q'\in [Q]$ such that there  are two vertices $i$ and $j$ in  $Q'_{1}$,  the set of vertices of $Q'$, such that
   \begin{itemize}
   \item The vertices $i$ and $j$ form a $3$-cycle with a third vertex say $k$;
   \item Applying any sequence of mutations of the form $\mu^{m}_{ijk}, m\in \mathbb {N}$ on $Q'$ where $\mu_{ijk}=\mu_{i}\mu_{j}\mu_{k}$  will produce a  quiver containing the $3$-cycle $i, j, k$ such that the weight $m_{ij}$ of the edge  $\cdot_{i}\longrightarrow \cdot_{j}$ is directly proportional to $m$. So increasing $m$ will result in increasing of the weight $m_{ij}$ with no upper bound;
   \item There is $m_{0}$ such that the set of quivers  $\{\mu^{m_{0}+t}_{ijk}(Q'); t\geq 1\}\in [Q]$ has no two  symmetric quivers  for  any permutation $\sigma$;
   \item Consider the set $X_{j}=\{x_{j}(t)=\mu^{m_{0}+t}_{ijk}(z_{j}), t\geq 1\}$  where $z_{j} \in Z$ is the cluster variable rooted at $j$, so $X_{j}$ is the set of cluster variables rooted at $j$ and produced from $z_{j}$. The set $X_{j}$ contains no repeated cluster variables.
  \end{itemize}
  In the following we will show that $x_{j}(1)$ does not belong to any seed that is symmetric to the initial seed.  Notice that this is the first time $j$ connects with $k$ so all cluster variables with  shortest mutations sequence rooted at $j$ do not have any cluster variable rooted at $k$ in their cluster formula. Also, every $x_{j}(t), t\geq 2$ contains cluster variables rooted at $k$ but with higher exponent so their denominators vectors would contain different $k^{\text{th}}$-component. Finally since the $3$-cycle contains $i, j$ and $k$ are not breakable nor periodic. Then non of the quivers in $\mathbb{T}(Q)$ is similar to the quiver contains is attached to a cluster containing $x_{1}(1)$.

  One can see that there is no sequence of mutations that can change the triangular cyclic to remove it or make the whole quiver similar to $Q$. i.e., every sequence of mutations produce a new quiver not similar to any.

  Finally, let $Q$ be one of the quivers in (3.2), i.e., it has a rigid vertex, say at $i$. After finding the whole mutation class and tracking down all the cluster variables, one can see that some of the cluster variables rooted at $i$ are not symmetric.
  \end{proof}

\begin{cor}For a quiver $Q$, if every quiver in $[Q]$ is a $v-v$, $\sigma$-similar quiver, then $\mathcal{A}(Q)$ is a symmetric cluster algebra.
\end{cor}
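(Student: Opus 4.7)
The plan is to recognize that this corollary is essentially immediate from Lemma 3.18(2) combined with the sufficiency argument already used in the $(\Leftarrow)$ direction of Theorem 3.19. The hypothesis here---every quiver in $[Q]$ is $v$-$v$, $\sigma$-symmetric---is precisely case (2) of the hypothesis of Lemma 3.18, so I would invoke its conclusion directly: for each $Q'\in [Q]$ and each index $i\in [1,n]$, there exist a mutation sequence $\mu_{\bar{i}}$ and a permutation $\tau$ such that $\mu_{\bar{i}}\mu_i(Q')=\tau(Q)$.

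Given an arbitrary cluster variable $z\in\mathcal{A}(Q)$, I would trace it to a seed $(Y,\mu_l(Q'))$ in which $z$ sits at the position freshly mutated at a vertex $l$. Applying the preceding paragraph to this $Q'$ and $l$ produces $\mu_{\bar{l}}$ and $\tau$ with $\mu_{\bar{l}}\mu_l(Q')=\tau(Q)$. Because the notation $\mu_{\bar{l}}$ forbids occurrences of $\mu_l$ outside pentagon subsequences $\mu_{[l,j]}$ or $\mu_{[j,l]}$, and the pentagon relation preserves the relevant cluster variable (the fact underlying the simply-laced avenue technique of Proposition 3.15, via [12, Proposition 3.12]), the variable $z$ still belongs to the cluster of the seed $(\mu_{\bar{l}}(Y),\tau(Q))$. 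This exhibits $z$ as a symmetric cluster variable, so $z\in\mathcal{B}(Q)$ and hence $\mathcal{A}(Q)\subseteq\mathcal{B}(Q)$. The reverse inclusion $\mathcal{B}(Q)\subseteq\mathcal{A}(Q)$ holds by the very definition of $\mathcal{B}(Q)$ as a subalgebra.

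The main point needing care is conceptual rather than computational: one must check that the pentagon rearrangements inside $\mu_{\bar{l}}$ do not transform $z$ into some non-cluster expression, which is exactly the content imported from [12, Proposition 3.12]. Beyond that, no new machinery is required; one could equivalently observe that the hypothesis forces $[Q]$ to be finite (by Proposition 3.9, a $v$-$v$, $\sigma$-symmetric quiver admits only isosceles or equilateral $3$-cycles of weight at most $4$, so the Lemma 3.6 obstruction to finite mutation type cannot occur) and forces $Q$ to contain no subquiver from (3.2) (since each of those has a vertex with no counter vertex), whence Theorem 3.19 applies to give $\mathcal{A}(Q)=\mathcal{B}(Q)$ at once.
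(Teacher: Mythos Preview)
Your proposal is correct and follows essentially the same route as the paper: the paper's proof reads simply ``Thanks to Lemma 3.18 and the Proof of Theorem 3.19,'' and you have unpacked exactly that, invoking case (2) of Lemma 3.18 and then rerunning the $\Leftarrow$ argument of Theorem 3.19 to conclude that every cluster variable lies in $\mathcal{B}(Q)$. Your closing alternative (deduce finiteness of $[Q]$ and absence of rigid vertices, then cite Theorem 3.19 directly) is a pleasant variant, but not a genuinely different strategy.
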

\begin{proof} Thanks to Lemma 3.18 and the Proof of Theorem 3.19.
\end{proof}

\begin{cor}[Properties of symmetric mutations algebras] Let $\widetilde{Q}$ be a quiver
\begin{enumerate}

\item The symmetric mutation algebra $ \mathcal{B}(Q)$ is the $\mathbb{Z}$-algebra generated by the set  $\bigcup\limits_{\grave{Q}\leq Q}\mathcal{X}_{\mathcal{B}(Q')}$.

   \item
   The following are equivalent
   \begin{enumerate}
     \item  The symmetric mutation algebra $\mathcal{B}(Q)$ is a sub-cluster algebra of $\mathcal{A}(Q)$, i.e., $\mathcal{B}(Q)=\mathcal{A}(Q')$ for a subquiver $Q'$;
     \item $Q'$ is the largest connected finite type subquiver of $Q$ that does not  contain any rigid vertices or any pre-unbounded triangles of $Q$.
     \item $\widetilde{Q}$ has a unique decomposition $\widetilde{Q}=\breve{Q}\bigodot \hat{Q}$, where $\check{Q}$ is a finite mutation subquiver  and $\hat{Q}$ is an infinite and/or with rigid vertices or trivial subquiver.
   \end{enumerate}

\end{enumerate}

\end{cor}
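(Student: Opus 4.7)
The plan is to handle Part (1) by a direct comparison of the generating sets of cluster variables, and then to establish Part (2) as the cycle of implications (c) $\Rightarrow$ (a) $\Rightarrow$ (b) $\Rightarrow$ (c), with Theorem 3.19 and Lemma 3.18 serving as the principal tools. The overall strategy is to isolate the ``good'' (finite mutation, rigid-free) portion of $\widetilde{Q}$ and to use the symmetric-avenue machinery to show that every symmetric cluster variable is produced entirely inside this good portion.

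For Part (1), I would establish the two containments separately. The inclusion $\mathcal{X}_{\mathcal{B}(Q)} \subseteq \bigcup_{\grave{Q}\leq Q}\mathcal{X}_{\mathcal{B}(\grave{Q})}$ is immediate by choosing $\grave{Q}=Q$. For the reverse inclusion, given a symmetric cluster variable $z\in \mathcal{X}_{\mathcal{B}(Q')}$ produced by some $\mu\in \mathbf{B}(Q')$ applied to the subseed $(\widetilde{Y},\widetilde{Q}_{I})$, I would lift $\mu$ to a mutation sequence on the ambient seed $(\widetilde{X},\widetilde{Q})$ that acts only on the exchangeable vertices sitting in $Q'$. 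Using the coherent decomposition apparatus of Definition 2.10, the vertices outside $Q'$ are unaffected while those inside are permuted by the same permutation $\tau$ realising the symmetry on $Q'$, so the resulting seed is symmetric to the initial seed of $Q$ and $z\in \mathcal{X}_{\mathcal{B}(Q)}$.

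For Part (2), the workhorse is (c) $\Rightarrow$ (a). Given a decomposition $\widetilde{Q}=\breve{Q}\bigodot \hat{Q}$ with $\breve{Q}$ of finite mutation type and free of rigid vertices, Theorem 3.19 gives $\mathcal{B}(\breve{Q})=\mathcal{A}(\breve{Q})$, and Part (1) then yields $\mathcal{A}(\breve{Q})\subseteq \mathcal{B}(Q)$. The reverse containment $\mathcal{B}(Q)\subseteq \mathcal{A}(\breve{Q})$ requires showing that no symmetric mutation sequence on $Q$ can invoke a vertex lying strictly inside $\hat{Q}$; this repeats the denominator-vector style argument from the ``$\Rightarrow$'' direction of Theorem 3.19, where touching a rigid vertex or an edge of a pre-unbounded triangle permanently destroys the possibility of returning to a seed symmetric to the initial one. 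For (a) $\Rightarrow$ (b), assuming $\mathcal{B}(Q)=\mathcal{A}(Q')$, Theorem 3.19 applied to $Q'$ forces $Q'$ to be finite mutation and rigid-free; maximality is then forced by Part (1), since strictly enlarging $Q'$ would produce symmetric cluster variables lying outside $\mathcal{A}(Q')$. The step (b) $\Rightarrow$ (c) is a structural packaging: take $\breve{Q}$ from (b) and let $\hat{Q}$ be the connected closure of its complement, glued along the overlapping vertices in the sense of Definition 2.10.

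The main obstacle will be the uniqueness assertion in (c), which is equivalent to the claim that the ``largest connected finite type subquiver of $Q$ without rigid vertices or pre-unbounded triangles'' is intrinsically determined by $Q$. I expect this to reduce to a case analysis driven by the leading-quiver classification in Lemma 3.11 together with the characterisation of pre-unbounded subquivers from Lemma 3.6, ruling out the coexistence of two incomparable maximal rigid-free finite mutation subquivers inside $\widetilde{Q}$. A secondary subtlety, already present in Part (1), is the clean lifting of symmetric mutation sequences from a subquiver to the ambient seed; the coherent decomposition formalism of Definition 2.10 is exactly the bookkeeping device that makes this lift well-defined, and I would invoke it carefully at the boundary between $\breve{Q}$ and $\hat{Q}$.
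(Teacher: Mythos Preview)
Your proposal is correct and follows essentially the same approach as the paper: both use Theorem 3.19 as the central tool, both lift symmetric mutations from a subquiver to the ambient seed by extending the permutation $\tau$ by the identity on the complementary vertices (this is exactly the paper's ``extending $\tau$ with identities for the vertices of $Q\backslash Q'$''), and both deduce (a)$\Rightarrow$(b) via a maximality argument. The only cosmetic difference is that the paper proves (a)$\Rightarrow$(b) and then declares (b)$\Rightarrow$(c) and (c)$\Rightarrow$(a) to be ``obvious'', whereas you spell out (c)$\Rightarrow$(a) in detail and correctly flag the uniqueness in (c) as the subtle point---in fact your treatment here is more careful than the paper's own.
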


\begin{proof}
\begin{enumerate}
\item  Note: the infinite subquiver and the quivers with rigid vertices do not contribute with any cluster variables in $\mathcal{X}_{\mathcal{B}(Q)}$.  Let $y\in B(Q)$ with an initial cluster variable rooted at $x_{k}$. Then $x_{k}\in Q'_{0}$ for some finite mutation type subquiver $Q'$ where the vertex $k$ is not rigid, thanks to Theorem 3.19. Now, let $y\in \mathcal{B}(Q')$, for some subquiver $Q'$ of finite mutation type and no rigid vertices. Then there exist a sequence of mutations $\mu$ such that $y\in \mu (X, Q')=(\mu(X), \tau(Q')$ for some permutation $\tau$. Hence, $y\in (\mu(X), \tau(Q))$ by extending $\tau$ with identities for the vertices of $Q\backslash Q'$.

\item  $ (a)\Rightarrow (b)$. Assume $\mathcal{B}(Q)$ be a subcluster algebra. Then there is a subseed $Q'$ such that  $\mathcal{B}(Q)=\mathcal{A}(Q')$. If $Q$ contains another symmetric subquiver $Q''$ that is not subquiver of $Q'$ then some of the symmetric cluster variables of $\mathcal{B}(Q)$ are in $\mathcal{A}(Q'')$ which are not in $\mathcal{A}(Q)$. Then $Q'$ must be the largest connected subquiver of $Q$.

    $ (b)\Rightarrow (c)$ and $ (c)\Rightarrow (a)$ are obvious.

\end{enumerate}
\end{proof}

The exceptional types $E_{6}, E_{7}, E_{8}, E^{(1)}_{6}, E^{(1)}_{7}, E^{(1)}_{8}, E^{(1,1)}_{7}, E^{(1,1)}_{8}, X_{6}$ and $X_{7}$.

\begin{itemize}
  \item[$E_{6}$:] \begin{equation}\label{}
  \nonumber    \xymatrix{&&\cdot\ar@{-}[d]&& \\
\cdot\ar@{-}[r]&\cdot\ar@{-}[r]&\cdot\ar@{-}[r]&\cdot\ar@{-}[r]&\cdot}
\end{equation}
  \item [$E_{7}$:] \begin{equation}\label{}
  \nonumber    \xymatrix{&&\cdot\ar@{-}[d]&& \\
\cdot\ar@{-}[r]&\cdot\ar@{-}[r]&\cdot\ar@{-}[r]&\cdot\ar@{-}[r]&\cdot &\cdot\ar@{-}[l]}
\end{equation}
  \item  [$E_{8}$:] \begin{equation}\label{}
  \nonumber    \xymatrix{&&\cdot\ar@{-}[d]&& \\
\cdot\ar@{-}[r]&\cdot\ar@{-}[r]&\cdot\ar@{-}[r]&\cdot\ar@{-}[r]&\cdot &\cdot\ar@{-}[l]&\cdot\ar@{-}[l]}
\end{equation}
  \item [$E^{(1)}_{6}$:] \begin{equation}\label{}
  \nonumber    \xymatrix{&&\cdot\ar@{-}[d]&&\\
  &&\cdot\ar@{-}[d]&& \\
\cdot\ar@{-}[r]&\cdot\ar@{-}[r]&\cdot\ar@{-}[r]&\cdot\ar@{-}[r]&\cdot}
\end{equation}
   \item [$E^{(1)}_{7}$:] \begin{equation}\label{}
  \nonumber    \xymatrix{&&&\cdot\ar@{-}[d]&& \\
\cdot\ar@{-}[r]&\cdot\ar@{-}[r]&\cdot\ar@{-}[r]&\cdot\ar@{-}[r]&\cdot\ar@{-}[r]&\cdot &\cdot\ar@{-}[l]}
\end{equation}

   \item  [$E^{(1)}_{8}$:] \begin{equation}\label{}
  \nonumber    \xymatrix{&&\cdot\ar@{-}[d]&& \\
\cdot\ar@{-}[r]&\cdot\ar@{-}[r]&\cdot\ar@{-}[r]&\cdot\ar@{-}[r]&\cdot \ar@{-}[r]&\cdot\ar@{-}[r]&\cdot\ar@{-}[r]&\cdot}
\end{equation}
  \item  [$E^{(1, 1)}_{6}$:] \begin{equation}\label{}
 \nonumber \xymatrix{ && \cdot\ar[dl]\ar[dr]\ar[drrr] &&&&\\
 \cdot\ar@{-}[r] &\cdot\ar[dr] &&\cdot\ar[dl]\ar@{-}[r]&\cdot&\cdot\ar[dlll]\ar@{-}[r]&\cdot\\
 &&\cdot\ar[uu]_{(2,2)}&&&&}
 \end{equation}

  \item  [$E^{(1, 1)}_{7}$:] \begin{equation}\label{}
 \nonumber \xymatrix{ &&& \cdot\ar[dl]\ar[dr]\ar[drrr] &&&&&\\
 \cdot\ar@{-}[r]&\cdot\ar@{-}[r] &\cdot\ar[dr] &&\cdot\ar[dl]&&\cdot\ar[dlll]\ar@{-}[r]&\cdot&\ar@{-}[l]\cdot\\
 &&&\cdot\ar[uu]_{(2,2)}&&&&&}
 \end{equation}

  \item [$E^{(1, 1)}_{8}$:] \begin{equation}\label{}
 \nonumber \xymatrix{ && \cdot\ar[dl]\ar[dr]\ar[drrr] &&&&&&&\\
 \cdot\ar@{-}[r] &\cdot\ar[dr] &&\cdot\ar[dl]&&\cdot\ar[dlll]\ar@{-}[r]&\cdot&\ar@{-}[l]\cdot &\ar@{-}[l]\cdot &\ar@{-}[l]\cdot\\
 &&\cdot\ar[uu]_{(2,2)}&&&&&&&}
 \end{equation}

  \item  [ ] \begin{equation}\label{}
               \nonumber X_{6}: \xymatrix{
\cdot \ar[r] &\cdot\ar[dl] \ar[dr] & \ar[l]\cdot \\
\cdot\ar[u]^{(2,2)}&\cdot\ar@{-}[u]&\cdot  \ar[u]_{(2,2)}}
            \ \ \ \  X_{7}:
              \  \xymatrix{\cdot\ar[dr]&\cdot\ar[l]_{(2,2)}\\
\cdot \ar[r] &\cdot\ar[dl] \ar[dr]\ar[u] & \ar[l]\cdot \\
\cdot\ar[u]^{(2,2)}&&\cdot  \ar[u]_{(2,2)}}
            \end{equation}
\end{itemize}

\subsection*{Acknowledgments} I would like to express my appreciation to the anonymous reviewer for all the valuable remarks and corrections. I would like to thank Fan Qin for sharing his  lecture notes with me which was for  the international workshop in cluster algebras and related topics in Tianjin, China, Summer 2017. I also would like to thank Fang Li and  Zongzhu Lin for very valuable discussions regarding the topic of this paper. Finally, I would like to thank the Math Department of Maquette University for the hospitality during writing this article.


\end{document}